\newtheorem{theorem}{Theorem}
\newtheorem*{thm*}{Theorem}
\newtheorem{lemma}[theorem]{Lemma}
\newtheorem{corollary}[theorem]{Corollary}
\newtheorem{proposition}[theorem]{Proposition}
\newtheorem{claim}{Claim}
\theoremstyle{definition}
\newtheorem*{claim*}{Claim}
\theoremstyle{plain}
\newtheoremstyle{named}{}{}{\itshape}{}{\bfseries}{.}{.5em}{\thmnote{#3}#1}
\theoremstyle{named}
\newcommand{\norm}[1]{\left\lVert#1\right\rVert}
\newcommand{\Z}{\mathbb Z}
\newcommand{\inprod}[1]{\left\langle#1\right\rangle}
\newcommand{\bad}{\mathsf{bad}}
\def\lin{\operatorname{lin}}
\newcommand{\E}{\mathbb{E}}
\newcommand{\R}{\mathbb{R}}
\newcommand{\N}{\mathbb{N}}
\newcommand{\PP}{\mathbb P}
\begin{document}

\title{Lyapunov exponents of orthogonal-plus-normal cocycles}
\author{Sam Bednarski and Anthony Quas}
\maketitle
\begin{abstract}
    We consider products of matrices of the form $A_n=O_n+\epsilon N_n$ where 
    $O_n$ is a sequence of $d\times d$ orthogonal matrices and $N_n$ has 
    independent standard normal entries and the $(N_n)$ are mutually independent. 
    We study the Lyapunov exponents of the cocycle as a function of $\epsilon$, giving
    an exact expression for the $j$th Lyapunov exponent in terms of the Gram-Schmidt
    orthogonalization of $I+\epsilon N$. Further, we study the asymptotics of these 
    exponents, showing that $\lambda_j=(d-2j)\epsilon^2/2+O(\epsilon^4|\log\epsilon|^4)$.
\end{abstract}

\section{Introduction and Statement of Results}

Lyapunov exponents play a highly important role in dynamical systems, allowing 
quantification of chaos, the development of a theory of hyperbolic and non-uniformly
hyperbolic dynamical systems and much more. We work in the framework of multiplicative
ergodic theory, where one has a base dynamical system $\sigma\colon\Omega\to\Omega$ 
preserving an ergodic measure $\mathbb P$ and a measurable map $A\colon \Omega\to M_{d\times d}(\R)$.
One then takes the cocycle of partial products $A^{(n)}_\omega$ of the sequence of $d\times d$ matrices and one studies
the limiting growth rate of the $j$th singular value of the products.

In the case $d=1$ this is often straightforward to calculate: the Lyapunov exponent is just
$\int\log|A(\omega)|\,d\PP(\omega)$. In higher dimensions, Lyapunov exponents tend to be much
harder to calculate, and it is rare to be able to give exact expressions. 

In this paper, we are able to establish exact expressions for Lyapunov exponents for
cocycles of a particular form, namely where the matrices $A_\omega$ are of the form 
$O_\omega+\epsilon N_\omega$, where the $O_\omega$ are orthogonal matrices
and the $N_\omega$ are mutually independent Gaussian matrices with independent standard
normal entries. We further assume that the $(N_{\sigma^n\omega})$ are independent of the 
cocycle $O_\omega$. We then interpret the cocycle as an additive noise perturbation of
a cocycle of orthogonal matrices. 

Our main results are the following:
\begin{theorem}\label{thm:expression}
Let $\sigma\colon\Omega\to\Omega$ be an ergodic transformation preserving an invariant
measure $\PP$ and let $O\colon\Omega\to O(d,\R)$ be a measurable map into the $d\times d$
orthogonal matrices. Suppose that $N\colon\Omega\to M_{d\times d}(\R)$ is measurable, and that
that conditioned on $(O_{\sigma^n\omega})_{n\in\Z}$ and $(N_{\sigma^n\omega})_{n\ne 0}$,
$N_\omega$ has independent standard normal entries. 
Then for all $\epsilon\in\R$, the Lyapunov exponents of the cocycle 
$A_\omega=O_\omega+\epsilon N_\omega$ are given by
$$
\lambda_j=\E\log\|c_j^\perp(I+\epsilon N)\|,
$$
where $c_j^\perp(A)$ is the $j$th column of the Gram-Schmidt orthogonalization of $A$. 
\end{theorem}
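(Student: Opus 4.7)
The plan is to perform iterative QR decomposition on the partial products, exploit orthogonal invariance of the Gaussian distribution to expose an i.i.d.\ structure in the $R$-factors, apply the strong law of large numbers, and then match the limit to $\lambda_j$ via a volume-growth argument. To start, I enlarge the probability space to include a Haar-random orthogonal matrix $Q_{-1}$, independent of the cocycle, and recursively define $Q_k\in O(d)$ together with $R_k$ upper-triangular with positive diagonal by the QR decomposition
$$
A_{\sigma^{k-1}\omega}\,Q_{k-1} = Q_k R_k.
$$
Uniqueness of QR then gives $A^{(n)}_\omega Q_{-1} = Q_{n-1} R^{(n)}$ with $R^{(n)} := R_{n-1}\cdots R_0$ upper triangular and diagonal $R^{(n)}_{jj} = \prod_{k<n}(R_k)_{jj}$.

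The decisive step is the factorization
$$
A_{\sigma^{k-1}\omega}\,Q_{k-1} = (O_{\sigma^{k-1}\omega} Q_{k-1})\,(I + \epsilon \tilde N_k),\qquad \tilde N_k := (O_{\sigma^{k-1}\omega} Q_{k-1})^T N_{\sigma^{k-1}\omega}\,Q_{k-1}.
$$
Since the prefactor is orthogonal, $R_k$ coincides with the $R$-factor of $I+\epsilon \tilde N_k$, so $(R_k)_{jj} = \|c_j^\perp(I+\epsilon \tilde N_k)\|$. The hypothesis on $N_\omega$ says that conditional on $Q_{-1}$, the $O$-cocycle, and $(N_{\sigma^m\omega})_{m\ne k-1}$, the entries of $N_{\sigma^{k-1}\omega}$ are i.i.d.\ standard normal; because $O_{\sigma^{k-1}\omega} Q_{k-1}$ is measurable with respect to this conditioning, orthogonal invariance of the standard Gaussian distribution on $M_{d\times d}(\R)$ under left and right multiplication forces $\tilde N_k$ to have the same conditional law. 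Hence $((R_k)_{jj})_{k\geq 0}$ is i.i.d.\ with the law of $\|c_j^\perp(I+\epsilon N)\|$, and the strong law of large numbers yields
$$
\frac{1}{n}\log R^{(n)}_{jj} = \frac{1}{n}\sum_{k<n}\log(R_k)_{jj} \longrightarrow \E\log\|c_j^\perp(I+\epsilon N)\|\quad\text{a.s.}
$$

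To identify this limit with $\lambda_j$, note that since $R^{(n)}$ is upper triangular,
$$
\prod_{j\leq k}R^{(n)}_{jj} = \operatorname{vol}_k\!\big(A^{(n)}_\omega(Q_{-1}V_k)\big),\qquad V_k := \operatorname{span}(e_1,\ldots,e_k).
$$
Because $Q_{-1}$ is Haar and independent of the cocycle, the random $k$-plane $Q_{-1}V_k$ is almost surely in general position with respect to the Oseledets filtration of $A$ at time $0$; Oseledets' multiplicative ergodic theorem applied to $\wedge^k A$ then forces $\tfrac{1}{n}\log\operatorname{vol}_k\to \lambda_1+\cdots+\lambda_k$. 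Matching with the previous display and taking successive differences in $k$ yields $\lambda_j = \E\log\|c_j^\perp(I+\epsilon N)\|$. The main technical obstacle I foresee is verifying the SLLN hypothesis $\E|\log\|c_j^\perp(I+\epsilon N)\||<\infty$: the positive tail is immediate from $\|c_j^\perp(I+\epsilon N)\|\leq 1+\epsilon\|N\|$ together with Gaussian concentration, while the negative tail calls for a small-ball estimate showing that $\|c_j^\perp(I+\epsilon N)\|$ has a density bounded uniformly near zero, which should follow from the explicit Gaussian density of $I+\epsilon N$.
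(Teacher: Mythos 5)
Your argument is correct in outline, and it reaches the theorem by a route that is genuinely different from the paper's even though both hinge on the same central trick: conjugating the Gaussian increment by the orthogonal part of the history and invoking orthogonal invariance (the paper's Lemma \ref{lem:samedist}) to reduce everything to the i.i.d.\ cocycle $I+\epsilon N$. You track the QR flag of the partial products applied to a Haar-random frame, which yields the \emph{exact} multiplicative identity $R^{(n)}=R_{n-1}\cdots R_0$ with i.i.d.\ diagonal entries distributed as $\|c_j^\perp(I+\epsilon N)\|$, and you then identify $\tfrac1n\log\prod_{j\le k}R^{(n)}_{jj}$ with $\lambda_1+\cdots+\lambda_k$ via the volume growth of a generic $k$-plane. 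The paper instead tracks the singular-value process $\Sigma_n=D(A^{(n)})$, shows it is a Markov chain equal in law to the one driven by $I+\epsilon N$ (Lemma \ref{lem:orthoreduce}), and compares $s_1^k(\Sigma_n)$ with a rank-$k$ truncation $B_n$; controlling the ratio $s_j(B_n)/s_j(\Sigma_n)\to1$ there requires simplicity of the spectrum via Gol'dsheid--Margulis. Your route buys an exact identity with no truncation error and no appeal to Gol'dsheid--Margulis, at the cost of the auxiliary randomization $Q_{-1}$ and the (standard, but worth stating) fact that a Haar-generic $k$-plane realizes the growth rate $\lambda_1+\cdots+\lambda_k$: decomposable $k$-vectors span $\bigwedge^k\R^d$, so they cannot all lie in the slow Oseledets subspace of $A^{\wedge k}$, whence the exceptional set in the Grassmannian is a proper subvariety and Haar-null, and Fubini plus the independence of $Q_{-1}$ finishes.

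Three items remain to be completed, none fatal. First, the integrability $\E\big|\log\|c_j^\perp(I+\epsilon N)\|\big|<\infty$ that you flag is handled exactly by your proposed small-ball argument; this is the content of the paper's Lemmas \ref{lem:logNest} and \ref{lem:Thetaest} (condition on the other columns, so that $\|c_j^\perp\|$ becomes $|a+\epsilon Z|$ with $Z$ standard normal and $a$ independent of $Z$). Second, before invoking Oseledets for the volume-growth identification you must verify $\E\log^+\|A^{\pm1}\|<\infty$ for the cocycle itself; this is the paper's Corollary \ref{cor:Margulis-suff} and follows from the same $\Theta$ estimate. Third, the hypothesis gives the conditional Gaussianity of $N_{\sigma^{k-1}\omega}$ given everything else only after applying measure-preservation of $\sigma$ to transport the time-$0$ statement to time $k-1$; say this explicitly, since it is what makes $(\tilde N_k)_k$ i.i.d.\ rather than merely identically distributed. (There is also a harmless index shift in your recursion: with $A^{(n)}_\omega=A_{\sigma^{n-1}\omega}\cdots A_\omega$ the QR step should read $A_{\sigma^{k}\omega}Q_{k-1}=Q_kR_k$ starting from $A_\omega Q_{-1}=Q_0R_0$.)
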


The following theorem describes the asymptotic behaviour of the exponents as $\epsilon$ 
tends to 0.

\begin{theorem}\label{thm:approx}
    Let the matrix cocycle be as above. 
    Then the Lyapunov exponents satisfy
    $$
    \lambda_j(\epsilon)=(d-2j)\tfrac {\epsilon^2}2+O(\epsilon^4|\log\epsilon|^4)
    $$
    as $\epsilon\to 0$.
\end{theorem}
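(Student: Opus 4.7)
The plan is to invoke Theorem~\ref{thm:expression} to reduce the problem to a single Gaussian matrix expectation $\lambda_j(\epsilon)=\E\log\|c_j^\perp(I+\epsilon N)\|$, and then perform a careful Taylor expansion in $\epsilon$ with truncation and symmetry control. The cocycle structure disappears at this point; only the law of one standard Gaussian matrix $N$ matters.

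The clean way to expand is via the QR-type identity $\|c_j^\perp(I+\epsilon N)\|^2=\det G_j(\epsilon,N)/\det G_{j-1}(\epsilon,N)$, where $G_k$ is the upper-left $k\times k$ block of $(I+\epsilon N)^T(I+\epsilon N)=I+\epsilon(N+N^T)+\epsilon^2 N^TN$. Using $\log\det(I+X)=\operatorname{tr} X-\tfrac12\operatorname{tr}(X^2)+\cdots$ and telescoping gives
$$
\log\|c_j^\perp(I+\epsilon N)\|=\epsilon N_{jj}+\tfrac{\epsilon^2}{2}\Big(\|n_j\|^2-2N_{jj}^2-\sum_{k<j}(N_{jk}+N_{kj})^2\Big)+O(\epsilon^3),
$$
where $n_j$ is the $j$th column of $N$; taking expectations produces the main term $(d-2j)\epsilon^2/2$ from $\E\|n_j\|^2=d$, $\E N_{jj}^2=1$, and $\E(N_{jk}+N_{kj})^2=2$.

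To sharpen the error from $O(\epsilon^3)$ to the claimed $O(\epsilon^4|\log\epsilon|^4)$, I would exploit the symmetry $F(\epsilon,N):=\log\|c_j^\perp(I+\epsilon N)\|=F(-\epsilon,-N)$, which follows from $I+\epsilon N=I+(-\epsilon)(-N)$. Writing $F=\sum_k a_k(N)\epsilon^k$, this forces $a_k(-N)=(-1)^k a_k(N)$, so $a_k$ is an odd function of $N$ whenever $k$ is odd, and the central symmetry of the Gaussian law kills $\E a_k(N)=0$. The argument survives restriction to any symmetric set such as $\{\|N\|\le M\}$.

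For the quantitative bound, truncate at $M=C|\log\epsilon|$ and write $\E F=\E[F\mathbf{1}_{\|N\|\le M}]+\E[F\mathbf{1}_{\|N\|>M}]$. On the good event with $\epsilon M\le\tfrac1{10}$, the Gram blocks are small perturbations of $I$, $F$ is real-analytic in $\epsilon$, and iterated differentiation of $\partial_\epsilon\log\det G=\operatorname{tr}(G^{-1}\partial_\epsilon G)$ yields the uniform bound $|\partial_\epsilon^4 F|\le C(1+\|N\|)^4$. Taylor's theorem then controls the remainder after the order-$3$ Taylor polynomial by $C\epsilon^4(1+\|N\|)^4$, which on integration contributes $O(\epsilon^4 M^4)=O(\epsilon^4|\log\epsilon|^4)$. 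On the bad event, the Gaussian operator-norm tail $\PP(\|N\|>M)\le Ce^{-cM^2}=e^{-c|\log\epsilon|^2}$ together with $\E F^2<\infty$ (from integrability of log-singular values of Gaussian matrices) handles the remaining contribution super-polynomially in $\epsilon$. The main obstacle is the derivative bound $|\partial_\epsilon^4 F|\le C(1+\|N\|)^4$: differentiating $\operatorname{tr}(G^{-1}G')$ four times produces many products of $G^{-1}$'s and of derivatives of $G=I+\epsilon A+\epsilon^2 B$, and one must combinatorially verify that the total degree in the entries of $N$ never exceeds $4$, i.e.\ that each $\partial_\epsilon G$ costs one $\|N\|$ and each $\partial_\epsilon^2 G$ costs two, exactly matching the derivative count. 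This degree count is precisely what produces the power $|\log\epsilon|^4$ in the theorem.
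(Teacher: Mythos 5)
Your proposal is correct in outline but follows a genuinely different route from the paper. Both arguments begin by invoking Theorem \ref{thm:expression} and then truncating $N$ on a logarithmic scale (your event $\{\|N\|\le C|\log\epsilon|\}$ plays the role of the paper's entrywise event $\bad^c$), and both extract the main term $(d-2j)\epsilon^2/2$ from an order-two expansion using Gaussian symmetry. The paper, however, does not work with $c_j^\perp$ directly: it introduces the surrogate $c_j'(A)=c_j-\sum_{i<j}\inprod{c_i,c_j}c_i$, proves separately (Theorem \ref{thm:lognormdiff}, via the inductive Claims \ref{claim:one}--\ref{claim:three} and Lemmas \ref{lem:Enormdiffbad}, \ref{lem:qdiff}) that $\E\log\|c_k^\perp\|=\E\log\|c_k'\|+O(\epsilon^4|\log\epsilon|^4)$, and then expands $\|c_j'\|^2=1+X_j$ with $X_j$ an explicit polynomial in the entries of $\epsilon N$, killing odd contributions term by term using independence and symmetry of the individual entries. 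You instead expand $\log\|c_j^\perp\|^2=\log\det G_j-\log\det G_{j-1}$ directly and use the global parity $F(\epsilon,N)=F(-\epsilon,-N)$ to annihilate \emph{all} odd Taylor coefficients in expectation at once --- a cleaner way to dispose of the order-$\epsilon^3$ term than the paper's bookkeeping, and your second-order coefficient $\tfrac12\bigl(d-2-2(j-1)\bigr)=\tfrac{d-2j}2$ agrees with the paper's $\E Y_j=\E(X_j-\tfrac12X_j^2)+O(\epsilon^4)$. The price you pay is concentrated in two places you should make honest: (i) the uniform bound $|\partial_\epsilon^4F|\le C(1+\|N\|)^4$ on the good event, which you correctly identify as the crux --- the degree count (each differentiation of $G^{-1}$ or of $G'=A+2\epsilon B$ costs one power of $\|N\|$, and $\|G^{-1}\|$ stays bounded since $\|\epsilon A+\epsilon^2B\|<\tfrac12$ there) does work, but it must be written out, whereas the paper's Claims \ref{claim:one}--\ref{claim:three} do the analogous work by elementary induction; and (ii) your Cauchy--Schwarz treatment of the tail needs a \emph{second} moment of $\log^-\|c_j^\perp\|$, which is true by the same anti-concentration argument as Lemma \ref{lem:logNest} but is not what the paper proves --- the paper instead uses the refined first-moment-on-a-set estimate in Lemma \ref{lem:Thetaest}, which you could substitute to avoid strengthening that lemma. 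Also note that $\E F^2$ grows like $|\log\epsilon|^2$ as $\epsilon\to0$, which is harmless against the super-polynomial tail but worth acknowledging.
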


We make the following conjecture. 
Let $\sigma$ be an ergodic measure-preserving transformation of
a space $(\Omega,\PP)$. If $B\colon\Omega\to M_{d\times d}(\R)$ is the generator
of a matrix cocycle with the property that $\|B_\omega\|\le 1$ almost surely,
and $N_\omega$ is Gaussian with the independence properties above, 
then setting $\lambda_j'(\epsilon)$ to be the sequence of Lyapunov exponents of
the cocycle $A^\epsilon_\omega=B_\omega+\epsilon N_\omega$, one has
$$
\lambda_j'(\epsilon)-\lambda_{j+1}'(\epsilon)\ge \lambda_j(\epsilon)-\lambda_{j+1}(\epsilon)
\text{ for all $\epsilon>0$,}
$$
where $\lambda_j(\epsilon)$ are the Lyapunov exponents for the cocycle 
described in Theorem \ref{thm:expression}.

That is, we conjecture that there are universal lower bounds on the
gaps between consecutive Lyapunov exponents of Gaussian perturbed cocycles
of matrices where the matrices in the unperturbed cocycle have norm at most 1; and 
that these lower bounds are obtained in the case where all of the matrices are
the identity matrix. 

The results in this paper are closely related to results in Newman \cite{Newman}, where 
he gave a result similar to Theorem \ref{thm:expression} for some i.i.d.\ cocycles
involving Gaussian matrices and SDE flows on the space of non-singular matrices. 
Newman also re-derives an important result of Dynkin \cite{Dynkin} 
that also has intermediate proofs due to
LeJan \cite{LeJan}; Baxendale and Harris \cite{BaxendaleHarris}; and Norris, Rogers and Williams \cite{NorrisRogersWilliams}.
Dynkin's result concerns the Lyapunov exponents of a simple stochastic flow on $GL_d(\R)$, the group of invertible $d\times d$ matrices,
and identifies explicit exact Lyapunov exponents for the flow. Although this cocycle is not the same as ours,
it is in the same spirit. The Lyapunov exponents in that paper
have a similar form to ours and are given by $\lambda_k=(d-2k+1)\sigma^2/2$
 
	\section{Definitions and Preliminary lemmas}
        If $N$ is a $d\times d$ matrix valued random variable whose entries are independent
        standard normal random variables, we will say that $N$ is the \emph{standard Gaussian matrix random variable}.
        We will need the following property of the normal distribution:
    \begin{lemma}\label{lem:samedist}
        Let $U$ be an orthogonal matrix and let $N$ be a standard Gaussian matrix random variable of the same dimensions.
        Then the matrices $N$, $UN$ and $NU$ are equal in distribution. 
    \end{lemma}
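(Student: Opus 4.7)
The plan is to show that the law of a standard Gaussian matrix depends only on the Frobenius norm of its argument, and then to observe that both left and right multiplication by an orthogonal matrix preserve that norm. Concretely, regarding $N\in M_{d\times d}(\R)$ as a random vector in $\R^{d^2}$, its joint density with respect to Lebesgue measure is
$$
f(X)=(2\pi)^{-d^2/2}\exp\bigl(-\tfrac12\,\mathrm{tr}(X^\top X)\bigr),
$$
which is manifestly a function of $\mathrm{tr}(X^\top X)=\|X\|_F^2$ alone.

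First I would verify that for any fixed orthogonal $U$, the maps $\Phi_L\colon X\mapsto UX$ and $\Phi_R\colon X\mapsto XU$ are linear on $M_{d\times d}(\R)$ and act as block-diagonal orthogonal transformations on $\R^{d^2}$ (applying $U$ to each column, respectively $U^\top$ to each row). Each therefore has determinant $\pm1$ and preserves Lebesgue measure. Next, by cyclicity of the trace, $\mathrm{tr}\bigl((UX)^\top(UX)\bigr)=\mathrm{tr}(X^\top U^\top UX)=\mathrm{tr}(X^\top X)$ and $\mathrm{tr}\bigl((XU)^\top(XU)\bigr)=\mathrm{tr}(U^\top X^\top XU)=\mathrm{tr}(X^\top X)$, so $f\circ\Phi_L=f\circ\Phi_R=f$. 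The change-of-variables formula then yields that $UN$ and $NU$ have density $f$, and hence the same distribution as $N$.

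An alternative column-wise viewpoint makes the statement even more transparent: the columns of $N$ are i.i.d.\ standard Gaussians in $\R^d$, so the columns $UN_{\cdot,j}$ of $UN$ are again i.i.d.\ standard Gaussians by rotational invariance of the standard Gaussian on $\R^d$. For $NU$ the columns are the linear combinations $\sum_i U_{ij}N_{\cdot,i}$; these are jointly Gaussian with mean zero, and a one-line computation using $U^\top U=I$ shows that the cross-covariance between the $j$th and $k$th column is $\delta_{jk}I$. Being jointly Gaussian and uncorrelated, they are independent standard Gaussians. There is no serious obstacle here; the only step deserving a little care is the independence check for the columns of $NU$, since unlike the $UN$ case the entries get mixed across columns.
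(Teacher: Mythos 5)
Your proof is correct. The column-wise argument in your second paragraph is essentially the paper's route: the paper derives the lemma from Proposition \ref{prop:MVN}, the fact that a linear image $VX$ of a multivariate normal is again normal with covariance $V\boldsymbol{\Sigma}V^T$, so that an orthogonal transformation preserves the standard covariance --- exactly your cross-covariance check $\sum_i U_{ij}U_{ik}=\delta_{jk}$ for the columns of $NU$ (the $UN$ case being the trivial instance). Your first argument, via the change-of-variables formula and the observation that the density $(2\pi)^{-d^2/2}\exp(-\tfrac12\operatorname{tr}(X^\top X))$ depends only on the Frobenius norm while $X\mapsto UX$ and $X\mapsto XU$ act as orthogonal maps of $\R^{d^2}$, is a genuinely different and equally standard route; it avoids invoking the characterization of a Gaussian law by its mean and covariance, at the cost of assuming the explicit density and the change-of-variables machinery. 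Both arguments are complete, and you correctly identify the only point needing care, namely the independence of the columns of $NU$, which your covariance computation settles.
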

    This follows from a more general fact about the multivariate normal distribution.
    \begin{proposition}\label{prop:MVN}
        Let $X\sim N(\boldsymbol{\mu},\boldsymbol{\Sigma})$ be a $d$-dimensional multivariate normal distribution with mean vector $\boldsymbol{\mu}$ and covariance matrix $\boldsymbol{\Sigma}$. Suppose $V$ is a $d\times n$ matrix of rank $d$. Then $VX \sim N(V\boldsymbol{\mu},V\boldsymbol{\Sigma}V^T)$.
    \end{proposition}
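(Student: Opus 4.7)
The plan is to prove this via characteristic functions, which handles rectangular $V$ without requiring invertibility and reduces the claim to a one-line substitution. I would take as the working definition the standard characterization: $X\sim N(\boldsymbol{\mu},\boldsymbol{\Sigma})$ iff
$$
\phi_X(t)=\E[e^{it^T X}]=\exp\!\left(it^T\boldsymbol{\mu}-\tfrac12 t^T\boldsymbol{\Sigma}t\right),
$$
together with the fact that characteristic functions uniquely determine distributions on Euclidean space.

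Next I would compute the characteristic function of $VX$ directly. For a test vector $s$ of the appropriate dimension,
$$
\phi_{VX}(s)=\E[e^{is^T VX}]=\E[e^{i(V^T s)^T X}]=\phi_X(V^T s),
$$
and substituting the formula for $\phi_X$ yields
$$
\phi_{VX}(s)=\exp\!\left(is^T V\boldsymbol{\mu}-\tfrac12 s^T V\boldsymbol{\Sigma}V^T s\right),
$$
which is exactly the characteristic function of $N(V\boldsymbol{\mu},V\boldsymbol{\Sigma}V^T)$. Uniqueness of characteristic functions then gives $VX\sim N(V\boldsymbol{\mu},V\boldsymbol{\Sigma}V^T)$.

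The rank hypothesis on $V$ plays no role in the computation itself; its purpose is to guarantee that $V\boldsymbol{\Sigma}V^T$ is positive definite (assuming $\boldsymbol{\Sigma}$ is), so that the resulting law is a non-degenerate multivariate normal in the classical sense. There is no real obstacle to speak of: the entire argument is a short manipulation of characteristic functions, and the content lies essentially in the defining property of the multivariate normal. An alternative route would be to use the Cramér--Wold characterization, observing that every linear functional $s^T(VX)=(V^T s)^T X$ is a linear combination of entries of $X$ and hence univariate normal; then the mean and covariance of $VX$ can be read off by linearity of expectation and bilinearity of covariance. Either route works, but the characteristic function approach is the most economical.
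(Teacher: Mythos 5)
Your proof is correct, but it takes a different route from the paper. The paper works from the representation characterization: $X\sim N(\boldsymbol{\mu},\boldsymbol{\Sigma})$ iff $X\sim AZ+\boldsymbol{\mu}$ with $AA^T=\boldsymbol{\Sigma}$ and $Z\sim N(\mathbf 0,I_d)$; applying $V$ gives $VX=(VA)Z+V\boldsymbol{\mu}$, which is again of that form with $(VA)(VA)^T=V\boldsymbol{\Sigma}V^T$. You instead compute $\phi_{VX}(s)=\phi_X(V^Ts)$ and invoke uniqueness of characteristic functions. Both arguments are standard one-liners; yours is fully self-contained given the characteristic-function definition and treats degenerate covariances uniformly, while the paper's is slightly shorter but leans on an auxiliary ``fact'' (that a linear image of a standard Gaussian is Gaussian with the stated covariance) which is essentially the special case $\boldsymbol{\Sigma}=I$ of the proposition itself. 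Your closing remark about the role of the rank hypothesis --- that it serves only to keep $V\boldsymbol{\Sigma}V^T$ nonsingular and plays no part in the computation --- is accurate and matches how the proposition is actually used later in the paper (projecting a Gaussian column onto a unit vector).
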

    \begin{proof}
    Recall that $X\sim N(\boldsymbol{\mu},\boldsymbol{\Sigma})$ if and only if $X\sim AZ +\boldsymbol{\mu}$ where $AA^T = \boldsymbol{\Sigma}$ and $Z \sim N(\boldsymbol{0},I_d)$. If $X = AZ+\boldsymbol{\mu}$
    this implies that 
    \begin{align*}
        VX &= VAZ + V\boldsymbol{\mu} \\
        &\sim N(V\boldsymbol{\mu},VA(VA)^T)  \text{ by the fact above}\\
        &\sim N(V\boldsymbol{\mu},VAA^TV^T) \\
        &\sim N(V\boldsymbol{\mu},V\boldsymbol{\Sigma}V^T)
    \end{align*}
    \end{proof}

\begin{lemma}\label{lem:logNest}
  Let $N$ be a standard normal random variable. Then for any $a$ and for $b\ne 0$,
  $\E\log^-|a+bN|\le \E\log^-|bN|<\infty$.
\end{lemma}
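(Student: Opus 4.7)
The plan is to use the layer-cake identity
$$
\E\log^-|X|=\int_0^\infty \PP(|X|<e^{-t})\,dt
$$
to reduce both sides to integrals of tail probabilities, and then to compare those tails pointwise in $s$. This converts a statement about logarithmic moments into a direct comparison between the distributions of $|a+bN|$ and $|bN|$ near the origin, which is where all of the relevant mass of $\log^-$ lives.

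For finiteness of $\E\log^-|bN|$, I would bound the density of $bN$ by its pointwise maximum $1/(|b|\sqrt{2\pi})$, giving the crude bound $\PP(|bN|<s)\le 2s/(|b|\sqrt{2\pi})$. Substituting $s=e^{-t}$ into the layer-cake formula produces an integral dominated by a constant multiple of $\int_0^\infty e^{-t}\,dt$, hence finite.

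For the inequality, the substantive step is to show that for every $s>0$ and every $a\in\R$,
$$
\PP(|a+bN|<s)\le \PP(|bN|<s).
$$
Writing $\phi_{|b|}$ for the centered normal density of standard deviation $|b|$ and $F_{|b|}$ for its CDF, both sides are integrals of $\phi_{|b|}$ over an interval of length $2s$, centered at $-a$ on the left and at $0$ on the right. The integral over an interval of length $2s$ centered at $c$ equals $F_{|b|}(c+s)-F_{|b|}(c-s)$, and differentiating in $c$ gives $\phi_{|b|}(c+s)-\phi_{|b|}(c-s)$, which is strictly negative for $c>0$ since $|c+s|>|c-s|$ and $\phi_{|b|}$ is strictly decreasing on $[0,\infty)$. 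By symmetry of $\phi_{|b|}$, this integral is maximized at $c=0$, which is exactly the displayed inequality.

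Integrating the pointwise tail comparison over $t$ in the layer-cake formula then yields $\E\log^-|a+bN|\le \E\log^-|bN|<\infty$. No real obstacle is anticipated; the only step with content is the unimodality/translation comparison above, a one-dimensional instance of Anderson's inequality, which occupies only a few lines.
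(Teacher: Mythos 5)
Your proposal is correct and follows essentially the same route as the paper: the layer-cake formula reduces both expectations to integrals of the probabilities that a Gaussian lands in an interval of fixed length centered at $-a$ versus at $0$, and the centered interval wins by symmetry and unimodality. The only cosmetic differences are that you justify the interval comparison by differentiating in the center (the paper simply asserts it) and you establish finiteness via the density bound rather than by computing $\E\log^-|N|$ explicitly; both variants are fine.
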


\begin{proof}
    We have
    \begin{align*}
        \E\log^-|a+bN|&=\int_0^\infty \PP(\log^-|a+bN|<t)\,dt\\
        &=\int_0^\infty\PP(N\in[-a-e^{-t}/|b|,-a+e^{-t}/|b|]\,dt\\
        &\le\int_0^\infty \PP(N\in [-e^{-t}/|b|,e^{-t}/|b|]\,dt\\
        &=\E\log^-|bN|.
    \end{align*}
    Since $\log^-|bN|\le\log^-|b|+\log^-|N|$, and
    $\E\log^-|N|=\frac{2}{\sqrt{2\pi}}\int_0^1 |\log x| e^{-x^2/2}\,dx$, 
    it is easy to see $\E\log^-|bN|<\infty$.
\end{proof}

For a matrix $B$, let $c_j(B)$ denote the $j$th column of $B$ and
let $\theta_j(B)=\text{dist}\big(c_j(B),\lin(\{c_i(B)\colon i\ne j\})\big)$
and let $\Theta(B)=\min\theta_j(B)$. 

\begin{lemma}\label{lem:Thetaest}
Let $A$ be an arbitrary matrix and let $\epsilon>0$. Let $Z$ denote a
standard $d\times d$ Gaussian matrix random variable and $N$ denote a standard normal
random variable. 
Then $\E\log^-\Theta(A+\epsilon Z)\le \E\log^-(\epsilon N)<\infty$.

Further, if $S$ is any set, $\E\big(\log^-\Theta(A+\epsilon Z)\,\mathbf 1_S\big)
\le d\,\PP(S)(1+\log^-(\epsilon\PP(S)))$.
\end{lemma}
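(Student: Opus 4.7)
The plan is to reduce $\theta_j(A+\epsilon Z)$ to a one-dimensional shifted Gaussian via a conditioning argument and then combine the $j$-indexed bounds using $\log^-\Theta=\max_j\log^-\theta_j\le\sum_j\log^-\theta_j$. Fix $j$ and let $\mathcal F_j=\sigma(c_i(Z):i\ne j)$. The subspace $V_j:=\lin\{c_i(A+\epsilon Z):i\ne j\}$ is $\mathcal F_j$-measurable and has dimension at most $d-1$, so $V_j^\perp$ contains an $\mathcal F_j$-measurable unit vector $u$ (for instance, normalize the projection onto $V_j^\perp$ of the first standard basis vector with nonzero projection). Since the orthogonal projection has norm bounded below by any one-dimensional coordinate,
$$\theta_j(A+\epsilon Z)\ge\bigl|\langle u,c_j(A)\rangle+\epsilon\langle u,c_j(Z)\rangle\bigr|.$$

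I would then condition on $\mathcal F_j$: the scalar $\langle u,c_j(A)\rangle$ becomes a constant, and $\langle u,c_j(Z)\rangle$ is standard normal (since the columns of $Z$ are independent, $c_j(Z)$ is standard Gaussian independent of $\mathcal F_j$, and $u$ is a unit vector). Lemma \ref{lem:logNest} applied conditionally yields $\E[\log^-\theta_j(A+\epsilon Z)\mid\mathcal F_j]\le\E\log^-|\epsilon N|$, and hence $\E\log^-\theta_j(A+\epsilon Z)\le\E\log^-|\epsilon N|<\infty$. Summing over $j$ gives the first assertion (with a factor of $d$ picked up from the union bound $\log^-\Theta\le\sum_j\log^-\theta_j$).

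For the localized bound, I use the same conditioning plus unimodality of the centered normal density to get the tail bound $\PP(\theta_j<e^{-t})\le\PP(|\epsilon N|<e^{-t})\le\min(1,e^{-t}/\epsilon)$. Then the layer-cake formula applied to $\log^-\theta_j\,\mathbf 1_S$ bounds the integrand by $\min(\PP(S),e^{-t}/\epsilon)$. Splitting at $t^*=\log^-(\epsilon\PP(S))$, below $t^*$ the integrand is $\PP(S)$, contributing $\PP(S)\log^-(\epsilon\PP(S))$; above $t^*$ the integrand is at most $e^{-t}/\epsilon$, contributing at most $\PP(S)$. This gives the per-$j$ bound $\PP(S)(1+\log^-(\epsilon\PP(S)))$, and summing over $j$ yields the stated inequality.

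The only delicate point is setting up the conditioning rigorously: choosing $u$ measurably with respect to $\mathcal F_j$ and verifying that, conditional on $\mathcal F_j$, the scalar $\langle u,c_j(Z)\rangle$ is $N(0,1)$. Once these are in place, the remainder is a direct application of Lemma \ref{lem:logNest} and a routine split-integral estimate using the fact that the standard normal density is bounded at the origin.
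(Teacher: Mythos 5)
Your proposal is correct and follows essentially the same route as the paper: condition on the other columns, reduce $\theta_j$ to a shifted one-dimensional Gaussian $|a+\epsilon\langle u,c_j(Z)\rangle|$ via an $\mathcal F_j$-measurable unit normal, invoke Lemma \ref{lem:logNest}, and for the localized bound use the layer-cake formula with the split at $\log^-(\epsilon\PP(S))$ together with the bound on the normal density; the only (harmless) variation is that you bound $\theta_j$ from below by the inner product with one chosen unit vector in $V_j^\perp$ rather than using the exact equality with the (a.s.\ unique) unit normal.
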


\begin{proof}
    Let $\mathcal F$ denote the $\sigma$-algebra generated by the columns of $N$ except 
    for the $j$th. Then $\E\log^-\theta_j(A+\epsilon Z)=
    \E\big(\E(\log^-\theta_j(A+\epsilon Z)|\mathcal F)\big)$.
    Let $\mathbf n$ be an $\mathcal F$-measurable unit normal to the subspace spanned
    by $(c_i(A+\epsilon Z))_{i\ne j}$ (this is almost surely unique up to a change of sign). 
    Then $\theta_j(A+\epsilon Z)=|\langle \mathbf n,c_j(A+\epsilon Z)\rangle|
    =|\langle \mathbf n,c_j(A))\rangle+\epsilon\langle \mathbf n,c_j(Z)\rangle|$.
    Let $a=\langle \mathbf n,c_j(A))\rangle$ (an $\mathcal F$-measurable random variable)
    and note that since $c_j(Z)$ is independent of the unit vector $\mathbf n$,
    conditioned on $\mathcal F$, by Proposition \ref{prop:MVN},
    $\langle \mathbf n,c_j(Z)\rangle$ is distributed
    as a standard normal random variable. Hence we have
    $\E\log^-\theta_j(A+\epsilon Z)=\E\big(\E(\log^-\theta_j(A+\epsilon Z)\big|\mathcal F)\big)
    =\E\big(\E(\log^-|a+\epsilon N|\big|\mathcal F)\big)\le\E\big(\E(\log^-|\epsilon N|\big|\mathcal F)\big)
    =\E\log^-|\epsilon N|$ which is finite by the lemma above.

    By definition, $\Theta(A+\epsilon Z)=\min_j\theta_j(A+\epsilon Z)$ so that
    $\log^-\Theta(A+\epsilon Z)=\max_j\log^-\theta_j(A+\epsilon Z)\le\sum_j\log^-\theta_j(A+\epsilon Z)$.
    By Lemma \ref{lem:logNest}, we see $\E\log^-\Theta(A+\epsilon Z)<\infty$ as required.

    Now if $S$ is any set, we have
    \begin{align*}
        \E\big(\log^-\theta_j(A+\epsilon Z)\,\mathbf 1_S\big)&=
        \int_0^\infty \PP(S\cap\{\log^-\theta_j(A+\epsilon Z)>t\})\,dt\\
        &\le \int_0^\infty \min\big(\PP(S),\PP(\theta_j(A+\epsilon Z)<e^{-t})\big)\,dt\\
        &\le \int_0^\infty \min\big(\PP(S),\PP(a+\epsilon N\in [-e^{-t},e^{-t}])\big)\,dt\\
        &\le \int_0^\infty \min\big(\PP(S),e^{-t}/\epsilon\big)\,dt,
    \end{align*}
    where in the third line, as above, $a$ is a random variable that is independent of $N$. For the fourth line,
    we used the fact that the density of a standard normal is bounded above by $(2\pi)^{-1/2}<\frac 12$.
    Separating the integration region into $[0,\log^-(\epsilon \PP(S))]$ and $[\log^-(\epsilon\PP(S)),\infty)$,
    we obtain $\E\big(\log^-\theta_j(A+\epsilon Z)\,\mathbf 1_S\big)\le \PP(S)\log^-(\epsilon\PP(S))+\PP(S)$.
    Since $\log^-\Theta(B)\le\sum_{j=1}^d\log^-\theta_j(B)$, the result follows. 
\end{proof}

For any vector $y$ in $\R^d$, $y$ has at
least one coefficient of magnitude $\|y\|/\sqrt d$, say the $j$th, so 
$\|By\|\ge \|y_jc_j(B)+\sum_{i\ne j}y_ic_i(B)\|\ge |y_j|\theta_j(B)\ge
(1/\sqrt d)\Theta(B)\|y\|$. If $B$ is invertible then $\Theta(B)$ is 
non-zero and substituting $y=B^{-1}x$ gives $\|B^{-1}\|\le \sqrt d/\Theta(B)$.

    \begin{corollary}\label{cor:Margulis-suff}
        Let $(A_n)$ denote an i.i.d. sequence of $d\times d$ random matrices
        where $A_n=I+\epsilon N_n$, and where $N_n$ is a $d\times d$ standard Gaussian matrix random 
        variables. Then $(A_n)$
        satisfies the following:
        \begin{enumerate}
            \item $A_n\in GL_d(\R)$ a.s.;
            \item the distribution of $A_n$ is fully supported in $GL_d(\R)$: for any non-empty open set $U\subset GL_d(\R)$, 
            $\PP(A_n\in U)>0$. 
            \item $\log\|A_n\|\in L^1(\Omega)$.
        \end{enumerate}
    \end{corollary}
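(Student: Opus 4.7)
The plan is to verify each of the three items directly; none of them should require heavy machinery, and all three ultimately reduce to standard facts about the Gaussian distribution plus the lemmas already established above.

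For (1), I would argue that the set of singular matrices $\{M\in M_{d\times d}(\R):\det M=0\}$ is the zero set of a nonzero polynomial, hence has Lebesgue measure zero in $M_{d\times d}(\R)$. Since $I+\epsilon N_n$ has a density that is absolutely continuous with respect to Lebesgue measure (a shifted Gaussian), it follows that $\PP(A_n\notin GL_d(\R))=0$. For (2), the same density is in fact strictly positive on all of $M_{d\times d}(\R)$, and $GL_d(\R)$ is open in $M_{d\times d}(\R)$, so any non-empty open $U\subset GL_d(\R)$ has positive Lebesgue measure and hence positive probability.

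For (3), I would split $\log\|A_n\|=\log^+\|A_n\|-\log^-\|A_n\|$ and bound the two pieces separately. The upper tail is easy: since $\|A_n\|\le 1+\epsilon\|N_n\|\le 1+\epsilon\|N_n\|_F$, one has $\log^+\|A_n\|\le\log(1+\epsilon\|N_n\|_F)\le \epsilon\|N_n\|_F$, and $\E\|N_n\|_F<\infty$ because $\|N_n\|_F^2$ is a $\chi^2_{d^2}$ random variable. For the lower tail, I would use the crude bound $\|A_n\|\ge\|A_ne_1\|\ge|\langle e_1,A_ne_1\rangle|=|1+\epsilon (N_n)_{11}|$, so that $\log^-\|A_n\|\le\log^-|1+\epsilon (N_n)_{11}|$; the latter has finite expectation by Lemma \ref{lem:logNest} applied with $a=1$ and $b=\epsilon$.

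There is no real obstacle here: the only mildly delicate point is handling $\log^-\|A_n\|$, where one might worry about $\|A_n\|$ being small if $\epsilon N_n$ is close to $-I$, but projecting onto a single standard basis vector reduces the question to a one-dimensional Gaussian tail estimate that has already been done in Lemma \ref{lem:logNest}.
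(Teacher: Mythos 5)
Your proof is correct, and items (1), (2) and the control of $\log^+\|A_n\|$ follow essentially the same route as the paper (absolute continuity of the shifted Gaussian plus the measure-zero singular locus, and a bound of the operator norm by a sum or Frobenius norm of Gaussian entries). Where you diverge is the lower tail of item (3): you bound $\|A_n\|\ge|\langle e_1,A_ne_1\rangle|=|1+\epsilon (N_n)_{11}|$ and invoke Lemma \ref{lem:logNest} directly, which is a clean, one-dimensional argument that proves exactly the stated claim. The paper instead bounds $\|A_1^{-1}\|\le\sqrt d/\Theta(A_1)$ and uses Lemma \ref{lem:Thetaest} to get $\E\log^+\|A_1^{-1}\|<\infty$; this is heavier machinery, but it establishes the strictly stronger two-sided integrability $\E\log\|A_1\|^{\pm1}<\infty$ (note $\log^-\|A_1\|\le\log^+\|A_1^{-1}\|$), which is the hypothesis actually needed for the Oseledets/Gol'dsheid--Margulis theorems invoked immediately afterwards. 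So your argument suffices for the corollary as literally stated, but if the corollary is meant to certify the hypotheses of those theorems, you would still need the paper's $\Theta$-based estimate (or an equivalent) to control $\log\|A_n^{-1}\|$, since a single large entry does not prevent $A_n$ from being nearly singular.
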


This corollary establishes that the sequence $(A_n)$ satisfies the conditions of the Gol'dsheid-Margulis theorem \cite[Theorem 5.4]{GoldsheidMargulis}
which ensures that the Lyapunov exponents of the cocycle $A^{(n)}=(I+\epsilon N_n)\cdots (I+\epsilon N_1)$
are all distinct.
    
    \begin{proof}
        The distribution of the matrices $A_i$ is mutually absolutely continuous with respect
        to Lebesgue measure. Since the zero locus of the polynomial equation $\det(A)=0$ is a measure zero set,
        the first and second conclusions are established. 
        To show $\log\|A_i\|$ is integrable, we separately show that $\log^+\|A_i\|$ and $\log^-\|A_i\|$
        are integrable. First,
        $$
        \E \log^+\norm{A_1}
        \leq \E \norm{A_1}
        \leq \E \sum_{1\leq i,j\leq d}|A_{ij}|
        $$
        where each $A_{ij}$ is an integrable normal random variable. 
        The fact that $\E\log^+\norm{A_1^{-1}}<\infty$ follows from Lemma \ref{lem:Thetaest} and the 
        observation that $\|A_1^{-1}\|\le \sqrt d/\Theta(A_1)$ made above.
    \end{proof}

We make extensive use of the singular value decomposition in what follows. 
More information on this topic may be found in Horn and Johnson \cite{horn_johnson_1991}
and Bhatia \cite{Bhatia}.
For a $d\times d$ matrix $A$, a singular value decomposition is a triple $(L,D,R)$
where $L$ and $R$ are orthogonal matrices and $D$ is a diagonal matrix 
with non-negative entries such that
$A=LDR$. We impose without loss of generality 
the requirement that the diagonal entries of $D$ are decreasing. The matrix $D$ is 
uniquely determined by $A$ while there is some freedom in the choice of $L$ and $R$.
The \emph{singular values} of $A$ are denoted $s_i(A)$, where $s_i(A)$ is the
$i$th entry of the diagonal of $A$. It is known (see for example
Ragunathan \cite[Lemma 1]{Raghunathan}) that there exist measurable functions
$L$, $D$ and $R$ mapping $M_d(\R)$ to $O(d,\R)$, $M_\text{diag}(d,\R)$
and $O(d,\R)$ respectively such that $A=L(A)D(A)R(A)$.

It is well known that $|s_i(A)-s_i(B)|\le \|A-B\|$ where
$\|\cdot\|$ is the standard operator norm on matrices. 
We also make use of the products $S_i^j(A)=s_i(A)\cdots s_j(A)$. 
These have an interpretation in terms of exterior algebra. 
We write $\bigwedge^k\R^d$ for the $k$th exterior power of $\R^d$
and equip it with the standard inner product coming from the Cauchy-Binet formula
and the corresponding norm. In particular if $v_1,\ldots,v_d$ is an orthonormal 
basis for $\R^d$, then $\{v_{i_1}\wedge\cdots\wedge v_{i_k}\colon
i_1<i_2<\ldots<i_k\}$ is an orthonormal basis for $\bigwedge^k\R^d$. 
With respect to the corresponding operator norm, it is well known that
$\big\|A^{\wedge k}\|=S_1^k(A)$.

If $(A_n)$ is an independent identically distributed sequence of random variables
taking values in $GL_d(\R)$ such that $\E\log\|A_1\|^{\pm 1}<\infty$,
it was shown by Oseledets \cite{Oseledets} and Raghunathan \cite{Raghunathan}
that the limits 
$$
\lim_{n\to\infty}\frac 1n\log s_j(A_n\ldots A_1)
$$
exist and are almost surely constant for almost every realization of $(A_n)$.
The almost sure limit is denoted $\lambda_j$ and
the $(\lambda_j)$ are the \emph{Lyapunov exponents} of the cocycle.

\section{Exact expressions for Lyapunov exponents}
For $1\leq k \leq d$, we define $e_k$ to be the $k$th coordinate vector, so
that, as previously defined, $c_k(A) := A e_k$ is the $k$th column of $A$. Let $c_k^\perp(A)$ denote the component 
of the $k$th column of $A$ which is orthogonal to the first $k-1$ columns. That is, suppressing the matrix $A$ for brevity,
   $c_1^\perp = c_1$, and 
$$
c_k^\perp := c_k-\sum_{1\leq j<k}\frac{\big\langle c_j^\perp,c_k\big\rangle}{\big\langle c_j^\perp,c_j^\perp\big\rangle}c_j^\perp.
$$

We now prove Theorem \ref{thm:expression}
which we restate here for convenience. 
\begin{thm*}
 Let $(U_n)_{n\in\Z}$ be a sequence of $d\times d$ orthogonal matrices
 and let $(N_n)_{n\in\Z}$ be a sequence of independent $d\times d$ matrices, each with independent
 standard normal coefficients. Let $\epsilon>0$ and let
 $A^\epsilon_j=U_j+\epsilon Z_j$. Then for $1\le k\le d$,
 the $k$th Lyapunov exponent of the cocycle $(A^\epsilon_{\sigma^{n-1}\omega}\cdots
 A^\epsilon_\omega)$ is given by 
 $$
 \lambda_k=\mathbb{E}(\log\big\|c_k^\perp(I+\epsilon N)\big\|).
 $$
\end{thm*}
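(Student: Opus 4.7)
My approach is an iterative QR decomposition of the partial products $B_n := A^\epsilon_{n-1} \cdots A^\epsilon_0$, reducing the question to an i.i.d.\ upper-triangular cocycle whose diagonal entries are exactly the Gram--Schmidt norms appearing in the theorem. Writing $B_n = Q_n T_n$ with $Q_n \in O(d,\R)$ and $T_n$ upper triangular with positive diagonal, I build the decomposition recursively: given $B_n = Q_n T_n$,
$$A^\epsilon_n Q_n = U_n Q_n + \epsilon N_n Q_n = (U_n Q_n)\bigl(I + \epsilon \hat N_n\bigr), \qquad \hat N_n := (U_n Q_n)^{-1} N_n Q_n.$$
Conditioning on $\mathcal F_n := \sigma(U_\bullet, N_0, \ldots, N_{n-1})$ and applying Lemma \ref{lem:samedist} twice (once for the right-multiplication by $Q_n$, once for the left-multiplication by $(U_n Q_n)^{-1}$) shows that $\hat N_n$ is a standard Gaussian independent of $\mathcal F_n$. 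Taking the QR decomposition $I + \epsilon \hat N_n = \hat Q_n R_n$ then yields the recursion $T_{n+1} = R_n T_n$, $Q_{n+1} = U_n Q_n \hat Q_n$, and makes $(R_n)$ an i.i.d.\ sequence distributed as the R-factor in the QR decomposition of $I + \epsilon N$.

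By construction, $R_n(k,k) = \|c_k^\perp(I + \epsilon \hat N_n)\|$, and since $T_n$ is upper triangular, $T_n(k,k) = \prod_{j=0}^{n-1} R_j(k,k)$. Integrability of $\log R_j(k,k)$ follows from Lemma \ref{lem:Thetaest} (for the negative part, via $R_j(k,k) \ge \Theta(I + \epsilon N)$) together with an elementary $\log^+$ bound, so the strong law of large numbers gives
$$\frac{1}{n} \log T_n(k,k) \ \longrightarrow \ \mu_k := \E \log \|c_k^\perp(I + \epsilon N)\| \quad \text{a.s.}$$

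The remaining step, which I expect to be the main obstacle, is identifying these diagonal rates with the Lyapunov exponents. Since $Q_n$ is orthogonal, $s_k(B_n) = s_k(T_n)$, and the upper-triangular structure of $T_n$ makes the standard flag $V_k = \mathrm{span}(e_1, \ldots, e_k)$ invariant. The identity $T_n^{\wedge k}(e_1 \wedge \cdots \wedge e_k) = \bigl(\prod_{j \le k} T_n(j,j)\bigr)\cdot(e_1 \wedge \cdots \wedge e_k)$ immediately yields $\lambda_1 + \cdots + \lambda_k \ge \mu_1 + \cdots + \mu_k$ for every $k$, with equality at $k = d$ via the determinant. To promote this to equality for each $k$, I would apply the multiplicative ergodic theorem to the one-dimensional quotient cocycle on each $V_k / V_{k-1}$, where $T_n$ acts as scalar multiplication by $\prod_j R_j(k,k)$; this identifies the multiset $\{\lambda_1, \ldots, \lambda_d\}$ with $\{\mu_1, \ldots, \mu_d\}$. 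The ordered matching $\lambda_j = \mu_j$ then reduces to verifying the monotonicity $\mu_1 \ge \mu_2 \ge \cdots \ge \mu_d$, which is consistent with the asymptotic expansion of Theorem \ref{thm:approx} and which I would establish for general $\epsilon$ by a coupling argument exploiting that $c_k^\perp$ lives in a subspace whose dimension decreases with $k$.
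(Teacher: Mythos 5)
Your reduction step is sound and is essentially the paper's Lemma \ref{lem:orthoreduce} transposed from the singular value decomposition to the QR decomposition: conditioning on the past and absorbing the orthogonal factors into the Gaussian via Lemma \ref{lem:samedist} does make $(R_n)$ i.i.d.\ with the law of the $R$-factor of $I+\epsilon N$, and the identification $R_n(k,k)=\|c_k^\perp(I+\epsilon \hat N_n)\|$ together with the SLLN correctly produces the rates $\mu_k$. Where the two arguments genuinely diverge is in how the diagonal rates are matched to the ordered Lyapunov spectrum. The paper works with $S_1^k=s_1\cdots s_k$ directly: it truncates $A_n\Sigma_{n-1}$ to its first $k$ columns, uses the Gol'dsheid--Margulis simplicity of the spectrum to show the truncation error is asymptotically negligible (their equation \eqref{eq:asymp}), and reads off $\lambda_1+\cdots+\lambda_k=\mu_1+\cdots+\mu_k$ from a rank-one wedge computation. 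Because $S_1^k$ automatically selects the top $k$ exponents, no ordering of the $\mu_k$ ever needs to be established.

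Your route, by contrast, leaves two real gaps at exactly this matching step. First, the inequalities $\lambda_1+\cdots+\lambda_k\ge\mu_1+\cdots+\mu_k$ with equality at $k=d$ do not determine the spectrum (take $d=2$, $(\lambda_1,\lambda_2)=(3,-1)$, $(\mu_1,\mu_2)=(1,1)$), so you are forced to rely on the full theorem that the Lyapunov spectrum of an integrable upper-triangular cocycle equals the multiset of diagonal growth rates; this is true and citable, but your quotient-cocycle sentence is not a proof of it. Second, and more seriously, the conclusion $\lambda_k=\mu_k$ then hinges on $\mu_1\ge\mu_2\ge\cdots\ge\mu_d$, which you only assert, deferring to an unspecified ``coupling argument.'' As stated, that sketch does not go through: $\|c_{k+1}^\perp\|$ and $\|c_k^\perp\|$ are distances of \emph{different} columns to nested subspaces, so the pointwise monotonicity of projections does not apply. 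The missing ingredient is an exchangeability statement: the law of $I+\epsilon N$ is invariant under conjugation by permutation matrices, whence $\E\log\operatorname{dist}\bigl(c_{\pi(k)},\lin(c_{\pi(1)},\ldots,c_{\pi(k-1)})\bigr)$ is independent of the permutation $\pi$; combining this (with the transposition of $k$ and $k+1$) with the pointwise bound $\operatorname{dist}(c_{k+1},\lin(c_1,\ldots,c_k))\le\operatorname{dist}(c_{k+1},\lin(c_1,\ldots,c_{k-1}))$ does yield $\mu_{k+1}\le\mu_k$. With that lemma and a citation for the triangular-cocycle spectrum theorem, your argument closes; without them it does not.
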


Fix $\epsilon>0$ and set $A_i:= U_i + \epsilon N_i$ for each $i$.
To find the Lyapunov exponents of this sequence we work with the products $A^{(n)}=A_n\cdots A_1$. 

We now define $\Sigma_n=D(A^{(n)})$ and study the evolution of $\Sigma_n$. 
More precisely we are interested in the stochastic process $(\Sigma_n)_{n\ge 0}$. 
To write $\Sigma_{n+1}$ in terms of $\Sigma_n$, we have 
$\Sigma_{n+1}=D\big(A_{n+1}L(A^{(n)})\Sigma_n R(A^{(n)})\big)$. 
The following lemma shows that this process $(\Sigma_n)$ is Markov
and that the process has the same distribution as the simpler process
$\Sigma'_{n+1}=D((1+\epsilon N_{n+1})\Sigma'_n)$.

\begin{lemma}\label{lem:orthoreduce}($(\Sigma_n)$ is a Markov process)
Let the sequence of matrices $(A_i)$ be given by $U_i+\epsilon N_i$ as above
and let $\Sigma_n=D(A^{(n)})$. Then $(\Sigma_n)$ is a Markov process:
For any measurable set $F$ of diagonal matrices,
\begin{align*}
\mathbb{P}(\Sigma_{n+1}\in F|\Sigma_n,\ldots,\Sigma_1)&=
\mathbb{P}(\Sigma_{n+1}\in F|\Sigma_n)\\
&=\mathbb{P}(D((I+\epsilon N)\Sigma_n)\in F|\Sigma_n).
\end{align*}

That is, the Markov process $(\Sigma_n)$ has the same distribution as the
Markov process $(\Sigma'_n)$ where $\Sigma'_0=I$
and $\Sigma'_{n+1}=D(A'_{n+1}\Sigma_n)$, where $(A'_n)$ is an independent
sequence of matrices, each distributed as $I+\epsilon N$.
\end{lemma}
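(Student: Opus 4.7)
The plan is to compute $\Sigma_{n+1}$ explicitly in terms of $\Sigma_n$ and the incoming noise $N_{n+1}$, using the SVD of $A^{(n)}$ together with the orthogonal-invariance of the standard Gaussian matrix (Lemma \ref{lem:samedist}). Write $A^{(n)} = L_n \Sigma_n R_n$ for $L_n := L(A^{(n)})$ and $R_n := R(A^{(n)})$. Since $D(MU) = D(M)$ whenever $U$ is orthogonal (the two matrices have the same Gram product $MM^T$), right-multiplication by $R_n$ may be stripped, giving
$$\Sigma_{n+1} = D(A_{n+1}A^{(n)}) = D(A_{n+1} L_n \Sigma_n).$$

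Next I would substitute $A_{n+1} = U_{n+1} + \epsilon N_{n+1}$ and factor the orthogonal matrix $V := U_{n+1} L_n$ out on the left. Using $D(UM) = D(M)$ for $U$ orthogonal,
$$\Sigma_{n+1} = D\bigl(V\Sigma_n + \epsilon N_{n+1} L_n \Sigma_n\bigr) = D\bigl(V(I + \epsilon V^T N_{n+1} L_n)\Sigma_n\bigr) = D\bigl((I + \epsilon \tilde N_{n+1})\Sigma_n\bigr),$$
where $\tilde N_{n+1} := V^T N_{n+1} L_n$.

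The key claim is then that $\tilde N_{n+1}$ is distributed as a standard Gaussian matrix, independent of $\mathcal F_n := \sigma(N_1,\ldots,N_n)$. Since $V^T$ and $L_n$ are $\mathcal F_n$-measurable orthogonal matrices, and since $N_{n+1}$ is independent of $\mathcal F_n$ with the standard Gaussian law, Lemma \ref{lem:samedist} applied conditionally on $\mathcal F_n$ shows that $\tilde N_{n+1}$ is conditionally standard Gaussian; as the conditional law does not depend on $\mathcal F_n$, $\tilde N_{n+1}$ is in fact unconditionally standard Gaussian and independent of $\mathcal F_n$. Since $\Sigma_n$ is $\mathcal F_n$-measurable, the conditional distribution of $\Sigma_{n+1}$ given $\Sigma_1,\ldots,\Sigma_n$ is the pushforward of the standard Gaussian law under $N \mapsto D((I+\epsilon N)\Sigma_n)$, which depends only on $\Sigma_n$. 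This gives the Markov property with the stated transition kernel, and since $(\Sigma'_n)$ is defined with the same kernel and same initial state $I$, the two processes are equal in distribution.

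The main obstacle, such as it is, is the conditional application of Lemma \ref{lem:samedist} with the random orthogonal matrices $V^T$ and $L_n$; everything else is routine SVD bookkeeping. That step only requires $N_{n+1}$ to be independent of the $\sigma$-algebra with respect to which $V^T$ and $L_n$ are measurable, which is built into the hypotheses of the theorem.
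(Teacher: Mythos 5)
Your proposal is correct and follows essentially the same route as the paper: strip $R_n$ using $D(MU)=D(M)$, factor the orthogonal matrix $U_{n+1}L_n$ out on the left, and apply the orthogonal invariance of the Gaussian (Lemma \ref{lem:samedist}) conditionally on $\mathcal F_n$ to replace $V^T N_{n+1} L_n$ by a fresh standard Gaussian independent of $\mathcal F_n$. The only cosmetic difference is that the paper makes the final tower-property step (passing from conditioning on $\mathcal F_n$ to conditioning on $\sigma(\Sigma_1,\ldots,\Sigma_n)$) explicit, which you compress into one sentence.
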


\begin{proof}
Let $\mathcal{F}_n$ denote the smallest $\sigma$-algebra with respect to which $N_1,\ldots,N_n$ are 
measurable. Let $\mathcal G_n$ be the smallest $\sigma$-algebra with respect to which
$\Sigma_1,\ldots,\Sigma_n$ are measurable (so that $\mathcal G_n$ is a sub-$\sigma$-algebra
of $\mathcal{F}_n$).

As usual, we write $A^{(n)}$ for the product $A_n\ldots A_1$. 
Let $L_n=L(A^{(n)})$, $\Sigma_n=D(A^{(n)})$, $R_n=R(A^{(n)})$. 
Let $F$ be a measurable subset of the range of $D$.
We compute 
\begin{align*}
\mathbb{P}(\Sigma_{n+1}\in F|\mathcal F_n)&=\mathbb{P}\Big(D(A_{n+1}L_n\Sigma_nR_n)\in F|\mathcal F_n\Big)\\
&=\mathbb{P}\Big(D(A_{n+1}L_n\Sigma_n)\in F|\mathcal F_n\Big)\\
&=\mathbb{P}\Big(D\big((U_{n+1}+\epsilon N_{n+1})L_n\Sigma_n\big)\in F|\mathcal F_n\Big)\\
&=\mathbb{P}\Big(D\big(U_{n+1}L_n(I+\epsilon L_n^{-1}U_{n+1}^{-1}N_{n+1}L_n)\Sigma_n\big)\in F|\mathcal F_n\Big)\\
&=\mathbb{P}\Big(D\big((I+\epsilon L_n^{-1}U_{n+1}^{-1}N_{n+1}L_n)\Sigma_n\big)\in F|\mathcal F_n\Big)\\
&=\mathbb{P}\Big(D\big((I+\epsilon N_{n+1})\Sigma_n\big)\in F|\mathcal F_n\Big),
\end{align*}
where the second and fifth lines follow from that facts that $D(A)=D(AU)=D(UA)$ for any matrix $A$ 
and any orthogonal matrix $U$. The sixth line uses the fact that $N_{n+1}$ is independent of $\mathcal F_n$
and Lemma \ref{lem:samedist} so that conditioned on $\mathcal F_n$, $L_n^{-1}U_{n+1}^{-1}N_{n+1}L_n$ has
the same distribution as $N_{n+1}$.
Since $N_{n+1}$ is independent of $\mathcal F_n$, this is equal to 
$\mathbb{P}\Big(D\big((I+\epsilon N_{n+1})\Sigma_n\big)\in F|\Sigma_n\Big )$. 
We have established that
$$
\mathbb{P}(\Sigma_{n+1}\in F|\mathcal F_n)=\mathbb{P}\Big(D\big((I+\epsilon N_{n+1})\Sigma_n\big)\in F|\Sigma_n\Big ).
$$
Taking conditional expectations of both sides with respect to $\mathcal G_n$,
we deduce 
$$
\mathbb{P}(\Sigma_{n+1}\in F|\Sigma_n,\ldots,\Sigma_1)=
\mathbb{P}(\Sigma_{n+1}\in F|\Sigma_n).
$$
\end{proof}

    \begin{proof}[Proof of Theorem \ref{thm:expression}]
        
Fix $1\leq k \leq d$. We use the stochastic process described in Lemma \ref{lem:orthoreduce}:
let $A_n=I+\epsilon N_n$, $\Sigma_0=I$, $\Sigma_{n}=D(A_n\Sigma_{n-1})
=\text{diag}(s_1(A_n\Sigma_{n-1}),\ldots,s_d(A_n\Sigma_{n-1}))$.
As before, we write $A^{(n)}=A_n\ldots A_1$. We note that $\Sigma_n$ is 
\emph{not} equal to $D(A^{(n)})$, but using Lemma \ref{lem:orthoreduce}
the two processes $(\Sigma_n)_{n\ge 0}$ and $\big(D(A^{(n)})\big)_{n\ge 0}$ have
the same distribution.

Let $B_n = A_n\Sigma_{n-1}\begin{pmatrix} e_1\; \dots \; e_k \;0 \;\dots \; 0
\end{pmatrix}$.
Then for all $1\leq j \leq k$,
\begin{align*}
    \big|s_j(\Sigma_{n})-s_j(B_n)\big|
    &=\big|s_j(A_n\Sigma_{n-1})-s_j(B_n)\big|\\
    &\leq \big\|A_n\Sigma_{n-1}-B_n\big\|  \\
    &=
    \norm{
    A_{n}\Sigma_{n-1}\begin{pmatrix} 0\; \dots 0\; e_{k+1} \;\dots \; e_d\end{pmatrix}}
    \\
    &=
    \norm{
    A_{n}\,\text{diag}(0,\ldots 0,s_{k+1}(\Sigma_{n-1}),\ldots ,s_d(\Sigma_{n-1}))
    } \\
    &\leq s_{k+1}(\Sigma_{n-1})\norm{A_{n}}
\end{align*}
Then we have
\begin{align*}
    \left|\frac{s_j(B_n)}{s_j(\Sigma_n)}-1\right|
    &=\left|\frac{s_j(B_n)-s_j(\Sigma_n)}{s_j(\Sigma_n)}\right| \\
    &\leq
    \frac{s_{k+1}(\Sigma_{n-1})}{s_j(\Sigma_n)}\norm{A_{n}}
\end{align*}
By Gol'dsheid and Margulis, \cite[Theorem 5.4]{GoldsheidMargulis}, $\frac{1}{n}\log s_j(A^{(n)})\to \lambda_j$ and 
$\frac{1}{n}\log s_{k+1}(A^{(n)})\to \lambda_{k+1}$ almost surely for some $\lambda_j>\lambda_{k+1}$. 
Since the processes $\big(D(A^{(n)})\big)$ and $(\Sigma_n)$ have a common distribution, it follows that
$\frac{1}{n}\log s_j(\Sigma_n)\to \lambda_j$ and 
$\frac{1}{n}\log s_{k+1}(\Sigma_n)\to \lambda_{k+1}$ almost surely.
So
$$\frac{1}{n}\log\left(\frac{s_{k+1}(\Sigma_{n-1})}{s_j(\Sigma_n)}\right)\to \lambda_{k+1}-\lambda_j<0$$
almost surely as $n\to \infty$. If this occurs, there is some $N\in \N$ such that 
$s_{k+1}(\Sigma_{n-1})/s_j(\Sigma_n)<e^{-n(\lambda_{k+1}-\lambda_j)/2}$ for all $n\geq N$. 
A well-known consequence of the Strong Law of Large Numbers ensures that $C(\omega):=\sup_n \norm{A_n}/n$ is finite a.s., 
so that $\norm{A_n}/n\leq C(\omega)$ for all $n$. For $n\geq N$ we then have
$$
\left|\frac{s_{k+1}(\Sigma_{n-1})}{s_j(\Sigma_n)}\right|\norm{A_{n}}
\leq C(\omega)n e^{-n(\lambda_{k+1}-\lambda_j)/2}\to 0
$$
as $n\to\infty$. Hence 
\begin{equation}\label{eq:asymp}
\frac{s_j(B_n)}{s_j(\Sigma_n)}\to 1 \text{ as $n\to \infty$}.
\end{equation}
For a matrix $A$, let $s_1^k (A)=s_1(A)\cdots s_k(A)$.
Since $B_n$ has $k$ non-zero columns, ${B_n}^{\wedge k}$ has rank one and we have
\begin{align*}
s_1^k(B_n) 
&= \norm{B_ne_1
\wedge B_ne_2
\wedge\dots
\wedge B_ne_k } \\
&= \norm{(A_n\Sigma_{n-1})e_1
\wedge (A_n\Sigma_{n-1})e_2
\wedge\dots
\wedge (A_n\Sigma_{n-1})e_k } \\
&= \norm{(A_{n}e_1)s_1(\Sigma_{n-1})
\wedge (A_{n}e_2)s_2(\Sigma_{n-1}) 
\wedge\dots
\wedge (A_{n}e_k)s_k(\Sigma_{n-1}) } \\
&=s_1^k(\Sigma_{n-1})
\norm{c_1(A_n)
\wedge c_2(A_n)
\wedge\dots
\wedge c_k(A_n)} \\
&=s_1^k(\Sigma_{n-1})
\big\|c_1^\perp(A_n)\big\|\big\|c_2^\perp(A_n)\big\|\cdots \big\|c_k^\perp(A_n)\big\|,
\end{align*}
where $\wedge$ denotes the wedge product. 

For $n\in\N$ and $1\le k\le d$, let $X^k_{n}:=\big\|c_1^\perp(A_n)\big\|
\big\|c_2^\perp(A_n)\big\|\cdots \big\|c_k^\perp(A_n)\big\|$.
Then $X^k_1,X^k_2,\dots$ is a sequence of i.i.d. random variables. 
Since $\Theta(A)\le \|c_i^\perp(A)\|\le \|A\|$, we see, using
Lemma \ref{lem:Thetaest} and Corollary \ref{cor:Margulis-suff}
that $\log\|c_i^\perp(A)\|$ is integrable. We have
\begin{align*}
    s_1^k(\Sigma_n)&=\frac{s_1^k(\Sigma_n)}{s_1^k(B_n)}s_1^k(B_n)\\
    &=\frac{s_1^k(\Sigma_n)}{s_1^k(B_n)}X_n^ks_1^k(\Sigma_{n-1}).
\end{align*}
Using induction, we obtain
$$
s_1^k(\Sigma_n)=\left(\prod_{j=1}^n\frac{s_1^k(\Sigma_j)}{s_1^k(B_j)}\right)X^k_1\ldots X^k_n.
$$
Hence
\begin{align*}
    \tfrac 1n\log s_1^k(\Sigma_n)&=
    \frac 1n\sum_{j=1}^n\log\frac{s_1^k(\Sigma_j)}{s_1^k(B_j)}+\frac 1n\sum_{j=1}^n \log X^k_j.
\end{align*}
By \eqref{eq:asymp}, the first term on the right side converges almost surely
to 0 and by the Strong Law of Large Numbers the second term converges almost surely to $\E\log X^k_1$.
Hence we obtain
$$
\lambda_1+\ldots+\lambda_k=\E\big(\log \|c_1^\perp(I+\epsilon N)\|+\ldots+
\log\|c_k^\perp(I+\epsilon N)\|\big).
$$
Subtracting the $(k-1)$-fold partial sum from the $k$-fold partial sum, we
obtain
$$
\lambda_k=\E\log\|c_k^\perp(I+\epsilon N)\|,
$$
as required.
\end{proof}

	This gives us an explicit description of $\lambda_k$. However it is difficult to compute for large matrices. In the next section we find an approximation for $\lambda_k$ which is easier to compute.
	
	\section{An approximation for $\lambda_j$}
	In this section we focus on the case where $A\sim I_d+\epsilon N$ and introduce the 
   computationally simpler vectors $c_j'(A)$ approximating $c_j^\perp(A)$, defined by $c_1'(A)=c_1(A)$ and
    $$
    c_k'(A)=c_k(A)-\sum_{1\leq j<k}\inprod{c_j(A),c_k(A)}c_j(A)
    $$
    With the same setup as in the previous section, when $|\epsilon\log\epsilon|<(100d)^{-1}$ we have
    \begin{theorem}\label{thm:lognormdiff}
        For any $d\in\N$, if $A_1\sim I_d+\epsilon N$ and $1\leq k\leq d$ then $\mathbb{E}\log\big\|{c_k^\perp}\big\|=\mathbb{E}\log\big\|{c_k'}\big\|+O(\epsilon^{4}|\log\epsilon|^4)$.
    \end{theorem}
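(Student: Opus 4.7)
The plan is to split the argument into pointwise estimates on a high-probability ``good event'' $G_\epsilon=\{\|N\|\leq C|\log\epsilon|\}$, where $C=C(d)$ is chosen large enough that $\PP(G_\epsilon^c)\leq\epsilon^M$ for any prescribed $M$, and a negligible contribution from its complement. On $G_\epsilon^c$, the key deterministic observation is that $\|c_k'\|\geq\|c_k^\perp\|\geq\Theta(A)$: both $c_k'$ and $c_k^\perp$ differ from $c_k$ by elements of $\mathrm{span}(c_1,\ldots,c_{k-1})$, so their projections onto the orthogonal complement of that span both equal $c_k^\perp$. Lemma \ref{lem:Thetaest} then controls $\E(\log^-\|c_k^\perp\|\,\mathbf{1}_{G_\epsilon^c})$ and $\E(\log^-\|c_k'\|\,\mathbf{1}_{G_\epsilon^c})$ by $O(\PP(G_\epsilon^c)\,|\log(\epsilon\PP(G_\epsilon^c))|)$, which is $o(\epsilon^4|\log\epsilon|^4)$ for $M\geq 5$. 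The $\log^+$ contributions are handled by $\|c_k^\perp\|,\|c_k'\|\leq\|A\|\leq 1+\epsilon\|N\|$ together with Gaussian tails.

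On $G_\epsilon$ I would prove three pointwise bounds. (i) $\|c_j^\perp-e_j\|,\|c_j'-e_j\|=O(\epsilon|\log\epsilon|)$ and $\|c_j^\perp\|^2,\|c_j'\|^2\in[1/2,2]$, by induction on $j$ using the defining recurrences. (ii) $\|c_k^\perp-c_k'\|=O(\epsilon^2|\log\epsilon|^2)$, obtained by comparing the two formulas summand by summand: they differ by replacing $c_j^\perp/\|c_j^\perp\|^2$ with $c_j$ in each Gram-Schmidt term, which contributes $O(\epsilon|\log\epsilon|)$, and this is multiplied by the inner product $\inprod{c_j,c_k}=O(\epsilon|\log\epsilon|)$. (iii) $\inprod{c_k',c_j}=O(\epsilon^2|\log\epsilon|^2)$ for every $j<k$, obtained by expanding the defining formula of $c_k'$ into
$$\inprod{c_k',c_j}=\inprod{c_j,c_k}(1-\|c_j\|^2)-\sum_{l<k,\,l\neq j}\inprod{c_l,c_k}\inprod{c_l,c_j}$$
and bounding each factor using (i).

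Estimates (ii) and (iii) then combine \emph{deterministically} to give $|\|c_k^\perp\|^2-\|c_k'\|^2|=O(\epsilon^4|\log\epsilon|^4)$ on $G_\epsilon$. Indeed, $c_k^\perp-c_k'$ lies exactly in $\mathrm{span}(c_1,\ldots,c_{k-1})$ by construction, while (iii), together with the well-conditioning of the $c_j$'s from (i), forces the orthogonal projection of $c_k'$ onto that span to have norm $O(\epsilon^2|\log\epsilon|^2)$. Hence in
$$\|c_k^\perp\|^2-\|c_k'\|^2 = 2\inprod{c_k',\,c_k^\perp-c_k'}+\|c_k^\perp-c_k'\|^2,$$
the inner product pairs only the in-span component of $c_k'$ (size $O(\epsilon^2|\log\epsilon|^2)$) with $c_k^\perp-c_k'$ (size $O(\epsilon^2|\log\epsilon|^2)$), and both summands are $O(\epsilon^4|\log\epsilon|^4)$. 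Converting to logs via $\log(1+x)=x+O(x^2)$ applied to $x=\|c_k^\perp\|^2/\|c_k'\|^2-1=O(\epsilon^4|\log\epsilon|^4)$ gives the desired pointwise bound on $|\log\|c_k^\perp\|-\log\|c_k'\||$, which integrates to the conclusion of the theorem once combined with the $G_\epsilon^c$ analysis.

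The main obstacle I anticipate is the clean execution of the induction in (i) and the careful propagation of $k$-dependent constants: each of (i), (ii), (iii) is elementary in isolation, but one must verify uniformly in $k\leq d$ that the constants do not blow up, which is precisely why the hypothesis $|\epsilon\log\epsilon|<(100d)^{-1}$ appears. A slightly surprising feature is that the bound on $|\|c_k^\perp\|^2-\|c_k'\|^2|$ is entirely deterministic on $G_\epsilon$: it comes from a geometric cancellation (the subspace containing $c_k^\perp-c_k'$ is almost orthogonal to $c_k'$) and does not require any Gaussian-moment identity.
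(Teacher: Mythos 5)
Your proposal is correct and follows essentially the same route as the paper: the same good/bad decomposition (the paper's event $\bad$ is the entrywise version of your $G_\epsilon^c$), the same treatment of the bad event via $\Theta(A)$, Lemma \ref{lem:Thetaest} and Gaussian tails, and the same inductive pointwise estimates on the good event, culminating in the fact that $c_k'-c_k^\perp$ lies in $\lin(c_1,\dots,c_{k-1})$ and has norm $O(\epsilon^2|\log\epsilon|^2)$ (the paper's Claims \ref{claim:one}--\ref{claim:three}). Two remarks. First, the inequality $\|c_k'\|\le\|A\|$ used for the $\log^+$ part on $G_\epsilon^c$ is false: $c_k'$ is not an orthogonal projection of $c_k$, and a term $\inprod{c_j,c_k}c_j$ can have norm of order $\|A\|^3$ when the columns are large (take $c_1=Me_1$, $c_2=M(e_1+e_2)/\sqrt2$). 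The paper instead uses the cubic bound $\|c_k'\|\le\big(1+\sum_{i,j}|a_{ij}|\big)^3$, which is just as compatible with Gaussian tails; this is a one-line fix. (Your lower bound $\|c_k'\|\ge\|c_k^\perp\|\ge\Theta(A)$ is correct.) Second, your final step can be streamlined: since $c_k'-c_k^\perp$ lies in $\lin(c_1,\dots,c_{k-1})$, which is \emph{exactly} orthogonal to $c_k^\perp$ by construction of Gram--Schmidt, one has the identity $\|c_k'\|^2=\|c_k^\perp\|^2+\|c_k'-c_k^\perp\|^2$, so your estimate (ii) alone yields $\big|\|c_k'\|^2-\|c_k^\perp\|^2\big|=O(\epsilon^4|\log\epsilon|^4)$; your estimate (iii) and the expansion around $c_k'$ are valid but unnecessary.
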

 
    We will say that $A=I+\epsilon N$ is \textit{bad} if $|N_{ij}|>|\log\epsilon|$ 
    for some $i,j$. Let $\bad$ denote the event that $A$ is bad. We first control 
    the contribution to
    $\mathbb{E}\log\norm{c_k^\perp}-\mathbb{E}\log\norm{c_k'}$ coming
    from the bad set.
    \begin{lemma}\label{lem:Enormdiffbad}
    Let $\epsilon>0$. Then
    \begin{align*}
    \E\big(\mathbbm 1_{\bad}\big|\log\|c_j^\perp(I+\epsilon N)\|\big|\big)&=O(|\log\epsilon|e^{-(\log\epsilon)^2/2});\text{ and}\\
    \E\big(\mathbbm 1_{\bad}\big|\log\|c_j'(I+\epsilon N)\|\big|\big)&=O(|\log\epsilon|e^{-(\log\epsilon)^2/2}).
    \end{align*}
    \end{lemma}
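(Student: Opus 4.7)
The plan is to split $|\log\|c\|| = \log^+\|c\| + \log^-\|c\|$ and bound each piece separately. Both halves will produce estimates that are independent of $j$ and that apply simultaneously to $c_j^\perp$ and $c_j'$, so the two statements can be handled in a single argument.

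For the positive parts, I use the crude norm bounds $\|c_j^\perp(A)\| \le \|A\|$ and $\|c_j'(A)\| \le \|A\|(1+(d-1)\|A\|^2)$ (the latter from $|\langle c_k, c_j\rangle| \le \|c_k\|\|c_j\| \le \|A\|^2$), which give $\log^+\|c_j^\perp\|, \log^+\|c_j'\| \le C_d(1 + \log^+\|A\|)$. Since $\|A\| \le 1 + \epsilon\|N\|_F$ and $\log(1+x)\le x$, we have $\log^+\|A\| \le \epsilon\|N\|_F$, so the task reduces to bounding $\E(\|N\|_F\mathbbm{1}_{\bad})$. Writing $\bad = \bigcup_{i,k}B_{ik}$ with $B_{ik} = \{|N_{ik}|>|\log\epsilon|\}$, I would split $\|N\|_F \le |N_{ik}| + \|N - N_{ik}e_ie_k^T\|_F$ inside each $B_{ik}$: the first term contributes $\E(|N_{ik}|\mathbbm{1}_{B_{ik}}) = 2\phi(|\log\epsilon|) = O(e^{-(\log\epsilon)^2/2})$ via the identity $\E(|N|\mathbbm{1}_{|N|>L}) = 2\phi(L)$, while the second is independent of $B_{ik}$ and contributes $O(1)\cdot\PP(B_{ik}) = O(e^{-(\log\epsilon)^2/2}/|\log\epsilon|)$ by Mills' ratio. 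Summing over the $d^2$ pairs gives $\E(\|N\|_F\mathbbm{1}_{\bad}) = O(e^{-(\log\epsilon)^2/2})$, so the positive-part contribution is $O(\epsilon e^{-(\log\epsilon)^2/2})$, comfortably absorbed.

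For the negative parts, the key observation is $\|c_j'(A)\| \ge \|c_j^\perp(A)\| \ge \Theta(A)$. The first inequality holds because $c_j' = c_j - \sum_{k<j}\langle c_k,c_j\rangle c_k$ has the form $c_j + v$ with $v\in\mathrm{span}(c_1,\ldots,c_{j-1})$, whereas $\|c_j^\perp\|$ is by definition $\mathrm{dist}(c_j,\mathrm{span}(c_1,\ldots,c_{j-1}))$; the second is the definition of $\Theta$. Thus $\log^-\|c_j^\perp\|, \log^-\|c_j'\| \le \log^-\Theta(A)$, and Lemma \ref{lem:Thetaest} applied with $S = \bad$ yields a bound of $d\,\PP(\bad)\bigl(1 + \log^-(\epsilon\PP(\bad))\bigr)$. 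A union bound together with Mills' ratio gives $\PP(\bad) = O(e^{-(\log\epsilon)^2/2}/|\log\epsilon|)$, so $\log^-(\epsilon\PP(\bad)) = \tfrac12(\log\epsilon)^2 + O(|\log\epsilon|)$, and multiplying produces exactly $O(|\log\epsilon| e^{-(\log\epsilon)^2/2})$.

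The delicate point is that a crude Cauchy--Schwarz bound on the positive part would yield only $O(\sqrt{\PP(\bad)}) = O(e^{-(\log\epsilon)^2/4}/\sqrt{|\log\epsilon|})$, which is \emph{larger} than the target $|\log\epsilon|e^{-(\log\epsilon)^2/2}$. Hence the per-entry decomposition of $\bad$---isolating the one ``bad entry'' (whose conditional expectation picks up the Gaussian tail decay $e^{-(\log\epsilon)^2/2}$) from the $d^2-1$ independent remaining entries---is what makes the positive part harmless and lets the negative part, governed by Lemma \ref{lem:Thetaest}, determine the final rate.
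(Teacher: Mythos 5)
Your proof is correct and follows essentially the same route as the paper: bound the positive parts by a crude polynomial bound on $\|A\|$ and integrate over each of the $d^2$ single-bad-entry events (isolating the one large Gaussian entry), and bound the negative parts via $\|c_j'\|\ge\|c_j^\perp\|\ge\Theta(A)$ together with the second estimate of Lemma \ref{lem:Thetaest} applied to $S=\bad$. (One cosmetic note: the positive-part contribution also includes the term $C_d\,\PP(\bad)=O(e^{-(\log\epsilon)^2/2}/|\log\epsilon|)$ from the constant in $C_d(1+\log^+\|A\|)$, which is still well within the stated bound.)
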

    
    \begin{proof}
    We write $c_j^\perp$ and $c_j'$ for $c_j^\perp(I+\epsilon N)$ 
    and $c_j'(I+\epsilon N)$ respectively.
    We control the positive parts $\log^+\|c_j'\|$ and $\log^+\|c_j^\perp\|$, and the negative parts
    $\log^-\|c_j'\|$ and $\log^-\|c_j^\perp\|$. For the positive parts, notice that $\|c_j^\perp\|\le \|c_j\|\le \sum_{i,j}|a_{ij}|$
    and $\|c_j'\|\le \Big(1+\sum_{i,j}|a_{ij}|\Big)^3$. The set $\bad$ is a union of $d^2$ parts of the form
    $\bad_{ij}=\{N\colon |N_{ij}|>|\log\epsilon|\}$. Using the bound $\log^+(x)\le x$, this gives 
    \begin{align*}
    \E\big(\mathbbm 1_{\bad}\log^+\|c_j^\perp\|\big)&\le \sum_{i,j}\int_{\bad_{i,j}}\Big(d+\epsilon\sum_{k,l}|x_{kl}|\Big)f_X\big((x_{kl})\big)d(x_{kl})\\
    &\le d^2\int_{\bad_{1,1}}\left(d+\epsilon d^2|x_{11}|\right)f_{N}(x_{11})dx_{11}\\
    &=O(\exp(-(\log\epsilon)^2/2)).
    \end{align*}
    A similar argument holds for $\E\big(\mathbbm 1_{\bad}\log^+\|c_j'\|\big)$.

    To control $\E\big(\mathbbm 1_{\bad}\log^-\|c_j^\perp\|\big)$ and $\E\big(\mathbbm 1_{\bad}\log^-\|c_j'\|\big)$,
    recall $\|c_j^\perp\|$ and $\|c_j'\|$ are bounded below by $\Theta(A)$. 
    By standard estimates on the tail of the normal distribution, 
    $\PP(\bad)=O(e^{-(\log\epsilon)^2/2}/|\log\epsilon|)$.
    We see from Lemma \ref{lem:Thetaest}, $\E\log^-\Theta(I+\epsilon N)\mathbf 1_{\bad}=
    O(|\log\epsilon|e^{-(\log\epsilon)^2/2})$,
    which gives the required estimates.

    \end{proof}

    We now give pointwise estimates for $\big|\log\|{c_k^\perp}\|-\log\norm{c_k'}\big|$ when $A$ is not bad. That is, when $A=I+\epsilon N_{ij}$
    where $|N_{ij}|\leq |\log\epsilon|$ for all $i,j$.

    \begin{lemma}\label{lem:qdiff}
    There exist $\epsilon_0>0$ and $C>0$ depending only on $d$
    such that for all matrices $A$ of the form $A=I+\epsilon X$ where $|X_{ij}|\le |\log\epsilon|$ for each $i,j$, then for each $k$,
    $$
    \Big|\log\big\|c_k^\perp(A)\big\|-\log\big\|c_k'(A)\|\Big|\le C(\epsilon|\log\epsilon|)^4\text{ for all $\epsilon<\epsilon_0$.}
    $$
    \end{lemma}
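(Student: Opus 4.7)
The plan is to reformulate both $\|c_k^\perp\|$ and $\|c_k'\|$ in terms of the Gram matrix $G:=A^T A$. Writing $A=I+\epsilon X$, we have $G=I+H$ with $H=\epsilon(X+X^T)+\epsilon^2 X^T X$. Under the non-bad assumption $|X_{ij}|\le|\log\epsilon|$, every entry of $H$ is $O(\epsilon|\log\epsilon|)$, and so (with $d$ fixed) its operator norm is also $O(\epsilon|\log\epsilon|)$. Taking $\epsilon$ sufficiently small (as is guaranteed by the hypothesis $|\epsilon\log\epsilon|<(100d)^{-1}$), $G$ and each of its leading principal submatrices are invertible with inverse of norm at most $2$.

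Next I would express both quantities of interest in terms of $G$. Let $G_{[k-1]}$ denote the leading $(k-1)\times(k-1)$ block of $G$ and let $g\in\R^{k-1}$ be the column with $g_j=G_{jk}=\inprod{c_j,c_k}$ for $j<k$. Since $c_k^\perp$ is the component of $c_k$ orthogonal to the span of $c_1,\ldots,c_{k-1}$, the standard least-squares formula gives
\[
\|c_k^\perp\|^2=G_{kk}-g^T G_{[k-1]}^{-1}g,
\]
while expanding the definition $c_k'=c_k-\sum_{j<k}g_j c_j$ directly yields
\[
\|c_k'\|^2=G_{kk}-2\|g\|^2+g^T G_{[k-1]}\,g.
\]
Setting $H_{k-1}:=G_{[k-1]}-I$, the Neumann series $G_{[k-1]}^{-1}=\sum_{n\ge 0}(-H_{k-1})^n$ rearranges to
\[
2I-G_{[k-1]}-G_{[k-1]}^{-1}=-H_{k-1}^2(I+H_{k-1})^{-1},
\]
so that $\|c_k^\perp\|^2-\|c_k'\|^2=-g^T H_{k-1}^2(I+H_{k-1})^{-1}g$.

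The final step is to estimate and pass to logarithms. Using $|g^T M g|\le \|g\|^2\|M\|$ together with the bounds $\|g\|=O(\epsilon|\log\epsilon|)$, $\|H_{k-1}\|=O(\epsilon|\log\epsilon|)$, and $\|G_{[k-1]}^{-1}\|\le 2$, we obtain $\bigl|\|c_k^\perp\|^2-\|c_k'\|^2\bigr|=O((\epsilon|\log\epsilon|)^4)$. The squared norms themselves are $1+O(\epsilon|\log\epsilon|)$ and bounded below by $1/2$, so the mean-value theorem applied to $\log$ yields $\bigl|\log\|c_k^\perp\|-\log\|c_k'\|\bigr|=O((\epsilon|\log\epsilon|)^4)$, which is the required bound.

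The main subtlety is that a direct estimate of the vector difference $\|c_k^\perp-c_k'\|$ only yields order $(\epsilon|\log\epsilon|)^2$, which translates to the same bound on the log difference and falls short of the target. The gain becomes transparent in the Gram-matrix formulation: the algebraic identity above kills the first-order terms in $H_{k-1}$, and combined with the two extra powers of $\epsilon|\log\epsilon|$ coming from $\|g\|^2$ this delivers the desired fourth-order bound.
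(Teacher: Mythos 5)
Your proof is correct, but it takes a genuinely different route from the paper's. The paper proceeds by an inductive analysis of the Gram--Schmidt coefficients: it first shows (Claims 1 and 2) that all the relevant inner products and coefficients are $O(\epsilon\eta)$, then (Claim 3) that $c_k'-c_k^\perp=\sum_{n<k}\gamma_n c_n$ with $\gamma_n=O(\epsilon^2\eta^2)$, and finally exploits the fact that this difference vector lies in $\lin(c_1,\dots,c_{k-1})$, to which $c_k^\perp$ is orthogonal; Pythagoras then gives $\|c_k'\|^2=\|c_k^\perp\|^2+\|\sum_n\gamma_n c_n\|^2$, and squaring the $O(\epsilon^2\eta^2)$ error is what produces the fourth-order bound. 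You obtain the same gain by a different mechanism: closed-form expressions for both squared norms in terms of the Gram matrix $G=I+H$ (the Schur complement for $\|c_k^\perp\|^2$, direct expansion for $\|c_k'\|^2$), followed by the exact identity $2I-G_{[k-1]}-G_{[k-1]}^{-1}=-H_{k-1}^2(I+H_{k-1})^{-1}$, which kills the first-order terms and, combined with $\|g\|^2=O((\epsilon\eta)^2)$, yields the fourth-order bound on the difference of squared norms directly. Your argument is non-inductive and arguably more transparent about where the cancellation comes from; the paper's version has the side benefit that its intermediate Claims (in particular Claim 1) are reused later in the proof of Theorem \ref{thm:approx}. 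You also correctly identify the key subtlety that a naive bound on $\|c_k^\perp-c_k'\|$ would only give a second-order estimate.
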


    As usual, we write $c_j$, $c_j^\perp$ and $c_j'$ in place of $c_j(A)$, $c_j^\perp(A)$ and 
    $c_j'(A)$ for brevity. 
    We define $\alpha_i^j :=\frac{\inprod{c_i^\perp,c_j}}{\norm{c_i^\perp}^2}$ so that
    $c_j^\perp = c_j-\sum_{i<j}\alpha_i^j c_i^\perp$. Throughout the proof, we let 
    $\eta=|\log\epsilon|$. We let $\epsilon_0$ be sufficiently small that 
    $\epsilon\eta<1/(100d)$ for $\epsilon<\epsilon_0$.
    The proof makes use of a number of claims.

     \bigskip
\begin{claim} Let $A=I+\epsilon X$ where $|X_{ij}|\le\eta$ for all $i,j$.\label{claim:one}
For all $1\leq n\leq d$, the following hold:
    \begin{enumerate}[(i)]
        \item $|\norm{c_n}^2-1|\leq 2\epsilon\eta+d\eta^2\epsilon^2\leq 3\epsilon\eta$;\label{it:claim1(i)}
        \item $|\norm{c_n^\perp}^2-1|\leq 3\epsilon \eta$;\label{it:claim1(ii)}
        \item $|\alpha_i^k|\leq 6\epsilon\eta$ for all $i\le n$ and $k>i$;\label{it:claim1(iii)}
        \item $|\inprod{c_n^\perp,c_k}|\leq 3\epsilon \eta$ for all $k>n$.\label{it:claim1(iv)} 
    \end{enumerate}
\end{claim}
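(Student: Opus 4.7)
The plan is to prove all four items by induction on $n$, with (i) and an analogous raw inner product bound serving as input at every stage. For (i), directly expanding $c_n = e_n + \epsilon X e_n$ gives
\[
\|c_n\|^2 = 1 + 2\epsilon X_{nn} + \epsilon^2 \sum_{i=1}^d X_{in}^2,
\]
and the bound $|X_{ij}|\le \eta$ together with $\epsilon\eta < 1/(100d)$ yields the stated estimate for each $n$ without any induction. The same expansion also shows that for $n\ne k$ the raw inner product satisfies $|\langle c_n,c_k\rangle|\le 2\epsilon\eta+d\epsilon^2\eta^2 \le 3\epsilon\eta$, a bound I will feed into the recursion for (iv).

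The base case $n=1$ is automatic because $c_1^\perp = c_1$, so (ii) coincides with (i); then (iii) and (iv) reduce to the raw inner product bound divided by $\|c_1\|^2 \ge 1 - 3\epsilon\eta$, which is bounded below by a positive constant by the smallness hypothesis.

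The inductive step proves (ii), (iv), (iii) at stage $n$ in that order, assuming all four items at all prior stages. Using the Gram-Schmidt identity $c_n = c_n^\perp + \sum_{i<n}\alpha_i^n c_i^\perp$ and the mutual orthogonality of $c_1^\perp,\ldots,c_{n-1}^\perp$, Pythagoras gives
\[
\|c_n^\perp\|^2 = \|c_n\|^2 - \sum_{i<n}(\alpha_i^n)^2\|c_i^\perp\|^2,
\]
so (ii) at stage $n$ follows from (i), the inductive (ii), and the inductive (iii) (which covers $\alpha_i^n$ for $i<n$). For (iv) at stage $n$, I expand
\[
\langle c_n^\perp, c_k\rangle = \langle c_n, c_k\rangle - \sum_{i<n}\alpha_i^n \langle c_i^\perp, c_k\rangle
\]
and combine the raw inner product bound with the inductive (iii) and (iv). Finally, (iii) at stage $n$ is the case $i=n$ of $\alpha_i^k = \langle c_i^\perp, c_k\rangle/\|c_i^\perp\|^2$, which follows from (iv) and (ii) already established at stage $n$.

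The only real obstacle is bookkeeping: the leading constants $2$, $3$, $6$ appearing in the stated bounds must carry enough slack to absorb the Gram-Schmidt corrections accumulated at each stage. Those corrections are products of two quantities of size $O(\epsilon\eta)$, hence of order $(\epsilon\eta)^2$, and after summing over $i<n\le d$ they contribute at most $O(d(\epsilon\eta)^2)$. Because $\epsilon\eta<1/(100d)$, this is bounded by $\epsilon\eta/100$, which is safely absorbed into the slack between the leading estimate $2\epsilon\eta$ coming from (i) and the stated bound $3\epsilon\eta$; similarly in (iii), the ratio $3\epsilon\eta/(1 - 3\epsilon\eta)$ lies well inside $6\epsilon\eta$. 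Once this bookkeeping is set up, the induction propagates cleanly and all four bounds hold up to and including $n=d$.
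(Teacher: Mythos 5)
Your proposal is correct and follows essentially the same route as the paper: establish (i) and the raw bound $|\langle c_i,c_j\rangle|\le 2\epsilon\eta+d\epsilon^2\eta^2$ directly, then induct on $n$, using the Gram--Schmidt recursion and Pythagoras for (ii) and (iv) and dividing by $\|c_n^\perp\|^2\ge 1/2$ for (iii). The only cosmetic difference is that you prove (ii) before (iv) within the inductive step (the paper does the reverse), which is immaterial since both depend only on earlier stages.
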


    \bigskip\noindent
\begin{proof}
    Since $|X_{ij}|\le\eta$ for all $i,j$, 
     for any $1\leq n\leq d$ and $i<j$ we have
     \begin{align*}
     |\norm{c_n}^2-1|&\leq 2\epsilon\eta+d\epsilon^2\eta^2\quad \text{and}\\
         |\inprod{c_i,c_j}|&\leq 2\epsilon \eta+d\epsilon^2\eta^2.
    \end{align*}
     This shows \eqref{it:claim1(i)} for all $n$, as well as \eqref{it:claim1(ii)},
     \eqref{it:claim1(iii)} and \eqref{it:claim1(iv)} for $n=1$. 
     
     Now suppose for some $2\le j\le d$, \eqref{it:claim1(ii)}--\eqref{it:claim1(iv)} each hold for all $n\leq j-1$. 
     Then for all $k>j$ we have
     \begin{align*}
         \big\langle c_j^\perp, c_k\big\rangle=\Big\langle c_j-\sum_{i<j}\alpha_i^j c_i^\perp, c_k\Big\rangle
         =\big\langle c_j, c_k\big\rangle-\sum_{i<j}\alpha_i^j\big\langle c_i^\perp, c_k\big\rangle
     \end{align*}
     This implies
     \begin{align*}
         \big|\big\langle c_j^\perp,c_k\big\rangle\big|
         &\leq |\big\langle c_j,c_k\big\rangle|+\sum_{i<j}|\alpha_i^j|
         \big|\big\langle c_i^\perp,c_k\big\rangle\big| \\
         &\leq (2\epsilon \eta+d\epsilon^2\eta^2)+ d \cdot (6\epsilon \eta )(3\epsilon \eta)\\
         &\leq 3\epsilon \eta,
     \end{align*}
     where we used \eqref{it:claim1(i)} and
     the induction hypotheses in the second line and the condition
     on $\epsilon_0$ in the third line. This establishes \eqref{it:claim1(iv)} for $n=j$.

    Since $c_1^\perp,\dots,c_j^\perp$ are mutually perpendicular, it follows that
     \begin{align*}
        \big\|c_j\big\|^2 = \big\|c_j^\perp\big\|^2+\sum_{i<j}(\alpha_i^j)^2 \big\|c_i^\perp\big\|^2
     \end{align*}
     Thus we have
    \begin{align*}
         \Big|\big\|c_j^\perp\big\|^2-1\Big|
         &=\bigg|\big\|c_j\|^2-1+\sum_{i<j}(\alpha_i^j)^2 \big\|c_i^\perp\big\|^2 \bigg|
          \\
          &\leq 
          \Big|\big\|c_j\big\|^2-1\Big|+\sum_{i<j}(\alpha_i^j)^2 \big\|c_i^\perp\big\|^2 
          \\
         &\leq(2\epsilon\eta+d\epsilon^2\eta^2)+ d(6\epsilon\eta)^2(1+3\epsilon\eta)
          \\
         &\leq 3\epsilon\eta,
     \end{align*}
    establishing \eqref{it:claim1(ii)} for $n=j$. 

    We show that \eqref{it:claim1(iii)} holds for $n=j$. Since by the induction hypothesis,
    $|\alpha_i^k|\le 6\epsilon\eta$ for all $i<j$ and $k>i$, 
    if suffices to show that $|\alpha_j^k|\le 6\epsilon\eta$ for all $k>j$.
    For any $k>j$, using \eqref{it:claim1(iv)}, we have
     \begin{align*}
         \big|\alpha_j^k\big|=\frac{|\big\langle c_j^\perp,c_k\big\rangle|}{\big\|c_j^\perp\big\|^2}\leq \frac{3\epsilon\eta}{1/2}=6\epsilon\eta
     \end{align*}
     which shows that (iii) holds for $n=j$. 
\end{proof}
    
\begin{claim} For each $1\leq n\leq d$, $c_n^\perp = c_n + \sum_{j<n} \beta_{j}^n c_j$ where $|\beta_j^n|<7\epsilon \eta$.
\label{claim:two}
\end{claim}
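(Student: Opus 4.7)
The plan is to prove Claim \ref{claim:two} by strong induction on $n$, using the defining recursion $c_n^\perp = c_n - \sum_{i<n}\alpha_i^n c_i^\perp$ together with the bound $|\alpha_i^n|\le 6\epsilon\eta$ from Claim \ref{claim:one}\eqref{it:claim1(iii)}.

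The base case $n=1$ is immediate since $c_1^\perp = c_1$ with no sum at all. For the inductive step, assume the claim holds for all indices strictly less than $n$, so that for each $i<n$ we may write $c_i^\perp = c_i + \sum_{j<i}\beta_j^i c_j$ with $|\beta_j^i|<7\epsilon\eta$. Substituting these expressions into the defining recursion yields
\[
c_n^\perp = c_n - \sum_{i<n}\alpha_i^n c_i - \sum_{i<n}\alpha_i^n\sum_{j<i}\beta_j^i c_j = c_n + \sum_{j<n}\beta_j^n c_j,
\]
where, after collecting coefficients of each $c_j$, one reads off
\[
\beta_j^n = -\alpha_j^n - \sum_{j<i<n}\alpha_i^n\beta_j^i.
\]

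The remaining task is to show $|\beta_j^n|<7\epsilon\eta$. Using the triangle inequality together with the inductive bound on the $\beta_j^i$ and the bound on the $\alpha_i^n$, one gets
\[
|\beta_j^n| \le 6\epsilon\eta + (d-2)\cdot 6\epsilon\eta\cdot 7\epsilon\eta \le 6\epsilon\eta + 42d(\epsilon\eta)^2.
\]
The choice of $\epsilon_0$ ensures $\epsilon\eta < 1/(100d)$, so the second term is at most $42d\cdot\epsilon\eta/(100d) < \epsilon\eta$, giving $|\beta_j^n| < 7\epsilon\eta$ as required.

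There is really no genuine obstacle here: the content of the claim is that inverting the unit-upper-triangular change of basis from $(c_1,\ldots,c_n)$ to $(c_1^\perp,\ldots,c_n^\perp)$ does not inflate the off-diagonal entries too much, and the smallness condition $\epsilon\eta \ll 1/d$ is precisely what closes the induction with room to spare. The only point to be careful about is that the constant $7$ on the right-hand side must strictly exceed the constant $6$ coming from $|\alpha_i^n|$, so that the quadratic error term built from the induction hypothesis can be absorbed.
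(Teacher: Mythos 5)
Your proof is correct and follows essentially the same route as the paper's: induction on $n$, substituting the inductive expression for $c_i^\perp$ into the recursion $c_n^\perp = c_n - \sum_{i<n}\alpha_i^n c_i^\perp$, reading off $\beta_j^n = -\alpha_j^n - \sum_{j<i<n}\alpha_i^n\beta_j^i$, and absorbing the quadratic term $O(d(\epsilon\eta)^2)$ using $\epsilon\eta < 1/(100d)$. The only difference is cosmetic (you count $d-2$ terms where the paper crudely bounds by $d$), so there is nothing further to add.
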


\begin{proof}
    We use induction on $j$. 
    The base case is $c_1^\perp=c_1$. Suppose the claim holds for all $n<j\leq d$. Then 
     \begin{align*}
         c_j^\perp 
         &= c_j - \sum_{i<j}\alpha_i^j c_i^\perp\\
         &=c_j - \sum_{i<j}\alpha_i^j \Big(c_i+\sum_{\ell<i}\beta_\ell^i c_\ell\Big)\\
         &=c_j - \sum_{\ell<j}\alpha_\ell^j c_\ell-\sum_{i<j}\alpha_i^j\sum_{\ell<i}\beta_\ell^i c_\ell \\
         &= c_j - \sum_{\ell<j}\alpha_\ell^j c_\ell -\sum_{\ell<j-1}\Big(\sum_{i=\ell+1}^{j-1}\alpha_{i}^j \beta_\ell^i\Big) c_\ell
     \end{align*}
     For any $\ell<j$, the coefficient of $c_\ell$ in the above expression is bounded by 
     $$
    |\alpha_\ell^j|+\sum_{i=\ell+1}^{j-1}|\alpha_i^j \beta_\ell^i|
    \leq 6\epsilon \eta + d(6\epsilon \eta)(7\epsilon\eta)\leq 7\epsilon\eta
$$
\end{proof}

\begin{claim}\label{claim:three}
    For all $1\leq j\leq d$, $c_j' = c_j^\perp+\sum_{n<j}\gamma_n c_n$ where $\gamma_n=O(\epsilon^2\eta^2)$,
    where the implicit constant depends only on $d$
\end{claim}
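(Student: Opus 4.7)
The plan is to subtract the two definitions and then verify that the remaining coefficients are all of order $\epsilon^2\eta^2$. Starting from the definition $c_j' = c_j - \sum_{i<j}\langle c_i,c_j\rangle c_i$ and using $c_j^\perp = c_j - \sum_{i<j}\alpha_i^j c_i^\perp$, I would apply Claim \ref{claim:two} to replace each $c_i^\perp$ inside the second sum by $c_i + \sum_{\ell<i}\beta_\ell^i c_\ell$. After collecting terms by $c_n$, the coefficient of $c_n$ in $c_j^\perp - c_j'$ comes out to
$$
\gamma_n = \big(\langle c_n,c_j\rangle - \alpha_n^j\big) - \sum_{i=n+1}^{j-1}\alpha_i^j\beta_n^i.
$$

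The second sum is immediately $O(\epsilon^2\eta^2)$ by Claim \ref{claim:one}\eqref{it:claim1(iii)} and Claim \ref{claim:two}, so the substance is showing that $\alpha_n^j$ differs from $\langle c_n,c_j\rangle$ only to second order. For this I would expand
$$
\alpha_n^j = \frac{\langle c_n^\perp, c_j\rangle}{\|c_n^\perp\|^2} = \frac{\langle c_n,c_j\rangle + \sum_{\ell<n}\beta_\ell^n\langle c_\ell,c_j\rangle}{1 + (\|c_n^\perp\|^2 - 1)},
$$
using Claim \ref{claim:two} in the numerator. Each $\langle c_\ell,c_j\rangle$ is $O(\epsilon\eta)$ (this is in the first display of the proof of Claim \ref{claim:one}) and each $\beta_\ell^n$ is $O(\epsilon\eta)$, so the numerator equals $\langle c_n,c_j\rangle + O(\epsilon^2\eta^2)$. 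The denominator equals $1 + O(\epsilon\eta)$ by Claim \ref{claim:one}\eqref{it:claim1(ii)}. Expanding the reciprocal as $1 - O(\epsilon\eta)$ and multiplying, the cross term $\langle c_n,c_j\rangle\cdot O(\epsilon\eta)$ is again $O(\epsilon^2\eta^2)$ because $\langle c_n,c_j\rangle$ itself is $O(\epsilon\eta)$. Therefore $\alpha_n^j - \langle c_n,c_j\rangle = O(\epsilon^2\eta^2)$, and combining with the earlier bound yields $\gamma_n = O(\epsilon^2\eta^2)$ with constants depending only on $d$.

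I don't expect any serious obstacle here: the claim is essentially a bookkeeping exercise exploiting the fact that, to leading order, the Gram-Schmidt denominator $\|c_n^\perp\|^2$ is $1$ and the Gram-Schmidt coefficient $\alpha_n^j$ agrees with the naive coefficient $\langle c_n,c_j\rangle$. The only mildly delicate point is recognizing that both corrections to this agreement (the $\beta$-terms in the numerator, and the denominator's deviation from $1$) multiply against quantities that are themselves $O(\epsilon\eta)$, so that all residuals land safely at the $\epsilon^2\eta^2$ scale rather than $\epsilon\eta^2$.
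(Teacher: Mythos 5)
Your proof is correct and follows essentially the same route as the paper: both decompose $c_j'-c_j^\perp$ in the basis $(c_n)$ via Claim \ref{claim:two}, bound the cross-sum $\sum_i\alpha_i^j\beta_n^i$ directly, and show $\alpha_n^j-\langle c_n,c_j\rangle=O(\epsilon^2\eta^2)$ by separating the $\beta$-correction in the numerator from the denominator's $O(\epsilon\eta)$ deviation from $1$, each of which multiplies an $O(\epsilon\eta)$ quantity.
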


\begin{proof}
   For any such $j$ we have 
    $$
    c_j'-c_j^\perp = \sum_{i<j} \bigg(
    \frac{\inprod{c_i^\perp,c_j}}{\inprod{c_i^\perp,c_i^\perp}}c_i^\perp-\inprod{c_i,c_j}c_i\bigg).
    $$

We identify the coefficient of $c_\ell$ when $c_j'-c_j^\perp$ is expanded in the basis $(c_k)$.
That coefficient may be seen to be 
\begin{align*}
&\frac{\inprod{c_\ell^\perp,c_j}}{\inprod{c_\ell^\perp,c_\ell^\perp}}-\inprod{c_\ell,c_j}
+\sum_{\ell<i< j}\frac{\inprod{c_i^\perp,c_j}}{\inprod{c_i^\perp,c_i^\perp}}\beta^i_\ell\\
=&\frac{\inprod{c_\ell^\perp,c_j}-\inprod{c_\ell,c_j}}
{\inprod{c_\ell^\perp,c_\ell^\perp}}
+\frac{\inprod{c_\ell,c_j}
\big(1-\inprod{c_\ell^\perp,c_\ell^\perp}\big)}{\inprod{c_\ell^\perp,c_\ell^\perp}}
+O(\epsilon^2\eta^2),
\end{align*}
where we added and subtracted $\inprod{c_\ell,c_j}/\inprod{c_\ell^\perp,c_\ell^\perp}$;
and the estimate for the third term follows from Claims \ref{claim:one} and
\ref{claim:two}.

Since $\inprod{c_\ell^\perp,c_j}-\inprod{c_\ell,c_j}=
-\inprod{\sum_{i<\ell}\beta^\ell_ic_i,c_j}$,
the estimates in Claims \ref{claim:one} and \ref{claim:two} show the first term is  $O(\epsilon^2\eta^2)$.
Finally since $\inprod{c_\ell,c_j}=O(\epsilon\eta)$ and $1-\inprod{c_\ell^\perp,c_\ell^\perp}$ is $O(\epsilon\eta)$ by Claim \ref{claim:one}, the 
middle term is also $O(\epsilon^2\eta^2)$.

\end{proof}
    
\begin{proof}[Proof of Lemma \ref{lem:qdiff}]
By orthogonality,
    \begin{align*}
        \big\|c_j'\big\|^2 = 
        \bigg\|c_j^\perp+\sum_{n<j}\gamma_n c_n\bigg\|^2
        =
        \big\|c_j^\perp\|^2+\bigg\|\sum_{n<j}\gamma_n c_n\bigg\|^2,
    \end{align*}
where $\gamma_n$ is as in Claim \ref{claim:three}.
Since $\gamma_n=O(\epsilon^2\eta^2)$, we obtain
$\big\|c_j'\big\|^2 = \big\|c_j^\perp\|^2+ O(\epsilon^4\eta^4)$.
Since $\big\|c_j^\perp\|^2$ is in the range $(\frac 12,\frac 32)$, 
it follows that $\big|\log\big\|c_j'\big\|-\log\big\|c_j^\perp\big\|\big|=O(\epsilon^4\eta^4)$ 
as required.
    
\end{proof}

    \begin{proof}[Proof of Theorem \ref{thm:lognormdiff}]
    Lemma \ref{lem:Enormdiffbad} shows that 
    \begin{align*}
    &\big|\mathbb{E}\big(\log\|c_k^\perp\|-\log\|c_k'\|)\mathbf 1_\bad\big)\big|\\
    \le{} &\mathbb{E}\big(\log\|{c_k^\perp}\|\mathbf 1_\bad\big)+\mathbb{E}\big(\log\|{c_k'}\|\mathbf 1_\bad\big)
    \\   
    ={}&O(|\log\epsilon|e^{-(\log\epsilon)^2}).
    \end{align*}
    and Lemma \ref{lem:qdiff} shows that
    $\big|\log\big\|{c_k^\perp}\big\|-\log\big\|{c_k'}\big\|\big|\mathbf 1_{\bad^c}=O(\epsilon^4|\log\epsilon|^4)$. 
    Taking the expectation of this and combining the estimates gives the theorem.
    \end{proof}

	\section{Computing $\mathbb{E}\log\|c_k'\|$}
    Finally, we find the dominant term in the asymptotic expansion for $\mathbb{E}\log\|c_j'\|$ in the same setup as the previous section. This is
    Theorem \ref{thm:approx} which we restate here for convenience. 
    \begin{thm*}
    Consider an orthogonal-plus-Gaussian cocycle as in Theorem 
    \ref{thm:expression}. Then the Lyapunov exponents satisfy
    $$
    \lambda_k(\epsilon)=(d-2k)\tfrac{\epsilon^2}2+O(\epsilon^4|\log\epsilon|^4)
    \text{ as $\epsilon\to 0$.}
    $$
    \end{thm*}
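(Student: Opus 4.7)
The plan is to use Theorem \ref{thm:expression} to write $\lambda_k(\epsilon)=\E\log\|c_k^\perp(I+\epsilon N)\|$ and then to invoke Theorem \ref{thm:lognormdiff} to replace $c_k^\perp$ by the computationally simpler $c_k'$ at the cost of an $O(\epsilon^4|\log\epsilon|^4)$ error. The remaining task is then to show
\[
\E\log\|c_k'(I+\epsilon N)\|=(d-2k)\tfrac{\epsilon^2}{2}+O(\epsilon^4|\log\epsilon|^4).
\]
This is the core of the theorem: the simpler projection $c_k'$ has a closed-form polynomial expression in $\epsilon$ and the entries of $N$, which is well-suited to a direct expansion.

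I would split the expectation across the bad set $\bad=\{|N_{ij}|>|\log\epsilon|\text{ for some }i,j\}$ and its complement. On $\bad$, the argument of Lemma \ref{lem:Enormdiffbad} already shows $|\E(\log\|c_k'\|\,\mathbf 1_{\bad})|=O(|\log\epsilon|e^{-(\log\epsilon)^2/2})$, which is far smaller than $\epsilon^4|\log\epsilon|^4$. On $\bad^c$, set $\eta=|\log\epsilon|$ so that $|N_{ij}|\le\eta$ uniformly, and write $c_j=e_j+\epsilon n_j$. Direct computation gives $\|c_k\|^2=1+2\epsilon N_{kk}+\epsilon^2\|n_k\|^2$ and $\langle c_j,c_k\rangle=\epsilon(N_{jk}+N_{kj})+\epsilon^2\langle n_j,n_k\rangle$ for $j<k$. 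Expanding
\[
\|c_k'\|^2=\|c_k\|^2-2\sum_{j<k}\langle c_j,c_k\rangle^2+\sum_{j,j'<k}\langle c_j,c_k\rangle\langle c_{j'},c_k\rangle\langle c_j,c_{j'}\rangle,
\]
and using Claim \ref{claim:one} to absorb $\|c_j\|^2=1+O(\epsilon\eta)$ and the off-diagonal terms $\langle c_j,c_{j'}\rangle=O(\epsilon\eta)$ into a remainder, I would obtain
\[
\|c_k'\|^2=1+2\epsilon N_{kk}+\epsilon^2Y+R,\qquad Y:=\sum_i N_{ik}^2-\sum_{j<k}(N_{jk}+N_{kj})^2,
\]
with $|R|=O(\epsilon^3\eta^3)$ pointwise on $\bad^c$.

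Next, with $u=\|c_k'\|^2-1=O(\epsilon\eta)$ on $\bad^c$, I would apply the Taylor expansion $\log(1+u)=u-u^2/2+u^3/3-u^4/4+\cdots$, noting that all terms from $u^5$ onward contribute $O(\epsilon^5\eta^5)$. Taking expectations: the linear piece $\E(2\epsilon N_{kk})=0$ vanishes; the quadratic piece $\epsilon^2\E Y=\epsilon^2(d-2(k-1))$ combines with $-\tfrac12\E(2\epsilon N_{kk})^2=-2\epsilon^2$ to yield $\epsilon^2(d-2k)$; odd-moment cross terms coming from $u^2/2$ and $u^3/3$ vanish in expectation by Gaussian parity and independence of the entries of $N$; and all remaining monomials are $O(\epsilon^4\eta^4)$. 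The small corrections produced by restricting expectations to $\bad^c$ (for instance $\E(N_{kk}^2\mathbf 1_{\bad^c})=1-O(e^{-\eta^2/2})$) are of smaller order still. Dividing by $2$ gives the claim.

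The main obstacle is the bookkeeping of cubic terms: at face value the $u^3/3$ piece and the cross terms $2\cdot 2\epsilon N_{kk}\cdot\epsilon^2 Y$ from $u^2/2$ look like $O(\epsilon^3\eta^3)$, but their expectations are in fact $O(\epsilon^4\eta^4)$ because the offending monomials involve an odd number of Gaussian factors. Verifying this cancellation for each monomial (together with confirming that $R$ itself, and the truncation of expectations to $\bad^c$, contribute only at the $O(\epsilon^4\eta^4)$ scale) is the only nontrivial part of the argument.
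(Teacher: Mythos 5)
Your proposal is correct and follows essentially the same route as the paper: reduce to $\E\log\|c_k'\|$ via Theorems \ref{thm:expression} and \ref{thm:lognormdiff}, split over $\bad$ and $\bad^c$, expand $\|c_k'\|^2$ as a polynomial in the entries of $\epsilon N$, Taylor-expand the logarithm, and kill the cubic-order monomials by Gaussian parity so that only $\E X_k$ and $-\tfrac12\E X_k^2$ contribute, yielding $(d-2k+2)\epsilon^2-2\epsilon^2=(d-2k)\epsilon^2$ before halving. The only organizational difference is that the paper computes the expectation of the exact polynomial $Y_k=X_k-\tfrac12X_k^2+\tfrac13X_k^3$ over the whole space and subtracts its $\bad$-contribution afterward, whereas you truncate to $\bad^c$ first and note that parity survives the truncation; both are fine.
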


	As in the previous sections, let $A = I_d + \epsilon N$ where $N$ is a standard
    Gaussian matrix random variable. 

    \begin{proof}
     Let $\eta=|\log\epsilon|$ and let $\bad$ be defined as above. 
     We assume $\epsilon$ is sufficiently small that $\|c_j'(I+\epsilon N)\|^2\in (\frac 12,\frac 32)$
     for all $N\in\bad^c$. Expanding, we have that 
    \begin{align*}
        &\quad\|c_j'\|^2=\bigg\langle
        c_j-\sum_{i<j}\langle c_i,c_j\rangle c_i\,,\,c_j-\sum_{k<j}\langle c_k,c_j\rangle c_k
        \bigg\rangle\\
        &=\|c_j\|^2 - 2\sum_{i<j}\langle c_i,c_j\rangle^2
        +\sum_{i,k<j}\langle c_i,c_j\rangle\langle c_k,c_j\rangle
        \langle c_i,c_k\rangle\\
        &=\|c_j\|^2 - 2\sum_{i<j}\langle c_i,c_j\rangle^2
        +\sum_{i<j}\langle c_i,c_j\rangle^2\|c_i\|^2
        +2\sum_{i<k<j}\langle c_i,c_j\rangle\langle c_k,c_j\rangle
        \langle c_i,c_k\rangle\\
        &=\|c_j\|^2 - \sum_{i<j}\langle c_i,c_j\rangle^2(2-\|c_i\|^2)
        +2\sum_{i<k<j}\langle c_i,c_j\rangle\langle c_k,c_j\rangle
        \langle c_i,c_k\rangle,
        \end{align*}
    where to obtain the third line from the second, we separated the case $i=k$ from the
    case $i\ne k$.
        
    We take a finite Taylor expansion, valid for $t\in(-1,1)$:
     $\log(1+t) = t - \frac{t^2}{2}+\frac{t^3}3 - R(t)$ where $R(t)=\frac{1}{4} (1+\xi)^{-4} t^4$
    for some $\xi$ with $|\xi|\leq |t|$. 
    Let $X_j$ be the random variable $\|c_j'(I+\epsilon N)\|^2-1$. Notice from the above
    that $X_j$ is a polynomial
    of degree 6 (whose coefficients don't depend on $\epsilon$)
    in the entries of $\epsilon N$. If $N=0$, then $c_j'(I+\epsilon N)=e_j$
    so that the constant term in the polynomial $X_j$ is 0. Notice also that
    by Claim \ref{claim:one}, on $\bad^c$, 
    all terms other than the first term in the expression for $\|c_j'\|^2$
    are $O(\epsilon^2|\log\epsilon|^2)$,
    while a calculation shows that $\|c_j\|^2=1+O(\epsilon|\log\epsilon|)$.
    Hence $X_j\mathbf 1_{\bad^c}=O(\epsilon|\log\epsilon|)$.
    Let $Y_j=X_j-\frac 12X_j^2+\frac 13X_j^3$, so that $Y_j$ is another polynomial
    in the entries of $\epsilon N$ with no constant term.
    Combining the above, on $\bad^c$
    $$
    \log(\|c_j'(I+\epsilon N)\|^2)=\log(1+X_j)=Y_j+O(\epsilon^4|\log\epsilon|^4).
    $$
 
    Then we have 
    \begin{equation}\label{eq:Eapprox}
    \begin{split}
        &\E\log(\|c_j'(I+\epsilon N)\|^2)\\
        ={}&\E\log(\|c_j'(I+\epsilon N)\|^2\mathbf 1_{\bad^c})
        +\E\log(\|c_j'(I+\epsilon N)\|^2\mathbf 1_\bad)\\
        ={}&\E(Y_j\mathbf 1_{\bad^c})+O(\epsilon^4|\log\epsilon|^4)+\E\log(\|c_j'(I+\epsilon N)\|^2\mathbf 1_\bad)\\
        ={}&\E Y_j-\E (Y_j\mathbf 1_\bad)+\E\log(\|c_j'(I+\epsilon N)\|^2\mathbf 1_\bad)
        +O(\epsilon^4|\log\epsilon|^4).   
    \end{split}
    \end{equation}
Since $Y_j$ is a fixed polynomial function of the entries of $\epsilon N$, 
and all monomials that are products of entries $N$ have finite expectation,
we see that $\E Y_j$ agrees up to order $\epsilon^4$ with the expectation of its
terms of degree 3 or lower. Also, since the entries of $N$ are independent and each has 
a symmetric distribution, the constant term of $Y_j$ being 0, 
the only terms that give a non-zero contribution to $\E Y_j$ are the
terms of the forms $N_{ab}^2$. Since the lowest order terms in $Y_j$ are
polynomials of degree 1, and $Y_j=X_j-\frac 12X_j^2+\frac 13X_j^3$,
the terms of the form $N_{ab}^2$ in $Y_j$ are those appearing in $X_j$ and $\frac 12X_j^2$.

We established above
$$
X_j=\|c_j\|^2-1 - \sum_{i<j}\langle c_i,c_j\rangle^2(2-\|c_i\|^2)
        +2\sum_{i<k<j}\langle c_i,c_j\rangle\langle c_k,c_j\rangle
        \langle c_i,c_k\rangle.
$$
We see that $\|c_j\|^2-1=2\epsilon N_{jj}+\epsilon^2\sum_i N_{ij}^2$
and $\langle c_i,c_j\rangle=\epsilon(N_{ij}+N_{ji})+\epsilon^2\sum_k N_{ki}N_{kj}$.

Substituting these in the expression for $X_j$, we see 
\begin{align*}
    \E X_j&=d\epsilon^2-\epsilon^2\sum_{i<j}\E (N_{ij}+N_{ji})^2+O(\epsilon^4)\\
    &=(d-2j+2)\epsilon^2+O(\epsilon^4).
\end{align*}
We also see $\E X_j^2=4\epsilon^2\E N_{jj}^2+O(\epsilon^4)$.
Combining these gives 
$$
\E Y_j=\E(X_j-\tfrac 12X_j^2)+O(\epsilon^4)=(d-2j)\epsilon^2+O(\epsilon^4).
$$
Therefore by \eqref{eq:Eapprox}, to finish the argument, it suffices to show
    $\E (Y_j\mathbf 1_\bad)=O(\epsilon^4|\log\epsilon|^4)$ 
    and $\E \log(\|c_j'(I+\epsilon N)\|^2\mathbf 1_{\bad})=O(\epsilon^4|\log\epsilon|^4)$.

Since $ \|c_j'(A)\|\ge \Theta(A)$, Lemma \ref{lem:Thetaest} shows 
$$
\E(\log^-\|c_j'(I+\epsilon N)\|\mathbf 1_\bad)
=O(|\log\epsilon|^2e^{-(\log\epsilon)^2/2}).
$$ 
Since $\|c_j'(A)\|\le 2(\sum_{k,l}|A_{kl}|)^3$,
we see $\E(\log^+\|c_j'(I+\epsilon N)\|^2\mathbf 1_\bad)=O(\PP(\bad))=O(e^{-|\log\epsilon|^2/2}/
|\log\epsilon|)$.

Finally, for any of the (finitely many) monomial terms $M$ appearing in $Y_j$, we can check
$\E M\mathbf 1_\bad=O(\PP(\bad))=O(e^{-|\log\epsilon|^2/2}/
|\log\epsilon|)$. This completes the proof.
    


\end{proof}

 \bibliographystyle{abbrv}

\end{document}